\theoremstyle{plain}
\newtheorem{thm}{Theorem}[section]
\newtheorem{cor}[thm]{Corollary}
\newtheorem{prop}[thm]{Proposition}
\newtheorem{lem}[thm]{Lemma}
\newcounter{theoremintro}
\newtheorem{theoremi}[theoremintro]{Theorem}
\theoremstyle{definition}
\newtheorem{rem}[thm]{Remark}
\newtheorem{defn}[thm]{Definition}
\newtheorem{ques}[thm]{Question}
\newcommand{\bC}{{\mathbb{C}}}
\newcommand{\bN}{{\mathbb{N}}}
\newcommand{\bR}{{\mathbb{R}}}
  \newcommand{\B}{{\mathcal{B}}}
\renewcommand{\H}{{\mathcal{H}}}
  \newcommand{\K}{{\mathcal{K}}}
\renewcommand{\P}{{\mathcal{P}}}
\renewcommand{\phi}{\varphi}
\newcommand{\upchi}{{\raise.35ex\hbox{$\chi$}}}
\newcommand{\doplus}{\operatornamewithlimits{\oplus}}
\newcommand{\rd}{\operatorname{d}}
\begin{document}

\title{On the power set of quasinilpotent operators}

\author[Y. Q. Ji ]{Youqing Ji }
\author[Y. H. Zhang]{Yuanhang Zhang}

\address{Youqing JI, School of Mathematics\\Jilin University\\Changchun 130012\\P.R. CHINA}\email{jiyq@jlu.edu.cn}
\address{Yuanhang Zhang, School of Mathematics\\Jilin University\\Changchun 130012\\P.R. CHINA}\email{zhangyuanhang@jlu.edu.cn}

\thanks{Youqing Ji is supported in part by NNSF of China (Nos.:12271202; 12031002); Yuanhang Zhang is supported in part by NNSF of China (No.:12071174).}
\begin{abstract}
For a quasinilpotent operator $T$ on a separable Hilbert space $\H$, Douglas and Yang define $k_x=\limsup\limits_{\lambda\rightarrow 0}\frac{\ln\|(\lambda-T)^{-1}x\|}{\ln\|(\lambda-T)^{-1}\|}$ for each nonzero vector $x$, and call $\Lambda(T)=\{k_x:x\ne 0\}$ the {\em power set} of $T$.
In this paper, we prove that $\Lambda(T)$ is right closed, that is, $\sup \sigma\in\Lambda(T)$ for each nonempty subset $\sigma$ of $\Lambda(T)$. Moreover, for any right closed subset $\sigma$ of $[0,1]$ containing $1$, we show that there exists a quasinilpotent operator $T$ with $\Lambda(T)=\sigma$. Finally, we prove that the power set of $V$, the Volterra operator on $L^2[0,1]$,
is $(0,1]$.
\end{abstract}
\subjclass[2010]{Primary: 47A10. Secondary: 47B37} \keywords{power set, quasinilpotent operator, right closed, Volterra operator}
\maketitle

\section{Introduction}

Let  $\H$ be a separable infinite dimensional complex Hilbert space, and  $\B(\H)$ be the algebra of bounded linear operators on $\H$. For $T\in \B(\H)$, let
 $\sigma(T)$ be  the spectrum of $T$ and $\rho(T)$ be the resolvent set.
 Say $T$ is  {\em quasinilpotent} if $\sigma(T)=\{0\}$.

To detect a finer understanding on the structure of quasinilpotent operators, Douglas and Yang \cite{DY3} introduced a new way to analyze the resolvent set. Let us recall Douglas and Yang's idea briefly. Given $T\in \B(\H)$ and $x\in \H\setminus\{0\}$, since $g_x(\lambda)=\|(\lambda-T)^{-1}x\|^2>0$ for all $\lambda\in \rho(T)$, $g_x(\lambda)|d\lambda|^2$ defines a Hermitian metric on $\rho(A)$.
This metric could reflect the properties of $T$ and its resolvent action on elements of $\H$ (cf. \cite{DY2,DY3,LY19,TY20}).
The notion of the {\em power set} is defined to gauge the ``blow-up" rate of this metric at 0 for quasinilpotent operators.
For convenience, we recall it below.

\begin{defn}
Suppose that $T\in\B(\H)$ is quasinilpotent and  $x\in\H\setminus\{0\}$. Let
$$k_{x}(T)=\limsup\limits_{\lambda\rightarrow 0}\frac{\ln\|(\lambda-T)^{-1}x\|}{\ln\|(\lambda-T)^{-1}\|}.$$
Usually, we briefly write $k_x$ instead of $k_{x}(T)$  if there is no confusion possible.
Set $\Lambda(T)=\{k_x:x\ne 0\}$, and call it the {\em power set} of $T$.
\end{defn}

\begin{rem}\label{basic remark}
For an quasinilpotent operator $T\in \B(\H)$, we list some basic and often-used facts about $\Lambda(T)$.
\begin{enumerate}

\item From \[\frac{\|x\|}{|\lambda|+\|T\|}\leq\|(\lambda-T)^{-1}x\|\leq\|(\lambda-T)^{-1}\|\|x\|,~~\forall\lambda\ne 0,\]
it follows that $\Lambda(T)\subset [0,1]$.

\item Since $\lim\limits_{\lambda\rightarrow 0}\|(\lambda- T)^{-1}\|=+\infty$,
it follows that $k_{\alpha x}=k_x$ for all $\alpha\ne 0$ and $x\ne 0$.
Thus, $\Lambda(T)=\{k_x:\|x\|=1\}$.

\item
It is not hard to check that the power set is invariant under dilation, i.e., $\Lambda(\alpha T)=\Lambda(T)$ for all $\alpha\neq 0$. In this sense, the power set could be viewed as a kind of ``{\em projective spectrum}".

\item
If $T=0$, then it is straightforward to check that $\Lambda(T)=\{1\}$.

\item $\Lambda(A)=\Lambda(T)$ if $A$ is similar to $T$ \cite[Proposition 5.2]{DY3}.

\end{enumerate}
\end{rem}
Recall that the celebrated hyperinvariant subspace problem asks: Does every bounded operator $T$ on $\H$ have a non-trivial closed hyperinvariant (i.e., invariant for all the operators commuting with $T$) subspace? This question is still open, especially for quasinilpotent operators in $\B(\H)$. We refer the readers to \cite{AE98,FJKP05,FP05, RR73} and the references therein for more information along this line.
The following result of Douglas and Yang established a link between the power set and the hyperinvariant subspace problem.

\begin{prop}\cite[Proposition 7.1, Corollary 7.2]{DY3}
Let $T\in \B(\H)$ be a quasinilpotent operator. For $\tau\in[0,1]$, write $M_\tau=\{x\in \H: x=0~{\it or}~k_{x}\leq\tau\}$. Then, $M_\tau$ is a linear subspace of $\H$, and $A(M_\tau)\subset M_\tau$ for every $A$ commuting with $T$.

In particular, when $\Lambda(T)$ contains two different points $\tau$ with closed $M_\tau$, $T$ has a nontrivial closed hyperinvariant subspace.

\end{prop}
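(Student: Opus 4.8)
The plan is to establish the three assertions in turn; the engine throughout is the submultiplicativity of the resolvent together with the fact that $\ln\|(\lambda-T)^{-1}\|\to+\infty$ as $\lambda\to0$ (valid because $T$ is quasinilpotent), which guarantees that additive constants wash out after one divides by $\ln\|(\lambda-T)^{-1}\|$. Writing $R(\lambda)=(\lambda-T)^{-1}$, I first record the elementary identity $\limsup_{\lambda\to0}\max(a(\lambda),b(\lambda))=\max(\limsup_{\lambda\to0}a(\lambda),\limsup_{\lambda\to0}b(\lambda))$, valid whenever both limsups on the right are finite, which will do most of the bookkeeping.

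For the subspace claim, scalar invariance is immediate from Remark~\ref{basic remark}(2) together with $0\in M_\tau$. For additivity, given $x,y\in M_\tau$ I would start from $\|R(\lambda)(x+y)\|\le\|R(\lambda)x\|+\|R(\lambda)y\|\le2\max(\|R(\lambda)x\|,\|R(\lambda)y\|)$, take logarithms, divide by the positive quantity $\ln\|R(\lambda)\|$, and pass to the $\limsup$. The $\ln 2$ term vanishes in the limit and the displayed max-identity yields $k_{x+y}\le\max(k_x,k_y)\le\tau$, so $x+y\in M_\tau$.

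For commutant invariance, if $A$ commutes with $T$ then $A$ commutes with $\lambda-T$ and hence with $R(\lambda)$ for every $\lambda\in\rho(T)$. For $x\in M_\tau$ with $Ax\ne0$ I would estimate $\|R(\lambda)Ax\|=\|AR(\lambda)x\|\le\|A\|\,\|R(\lambda)x\|$, take logarithms and divide by $\ln\|R(\lambda)\|$; since $\ln\|A\|/\ln\|R(\lambda)\|\to0$, this gives $k_{Ax}\le k_x\le\tau$, whence $Ax\in M_\tau$ (the case $Ax=0$ being trivial).

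Finally, for the corollary, let $\tau_1<\tau_2$ be two distinct points of $\Lambda(T)$ for which the corresponding sets are closed; I claim $M_{\tau_1}$ is the desired subspace. Because $\tau_1\in\Lambda(T)$ there is a nonzero $y$ with $k_y=\tau_1\le\tau_1$, so $M_{\tau_1}\ne\{0\}$; because $\tau_2\in\Lambda(T)$ there is a nonzero $z$ with $k_z=\tau_2>\tau_1$, so $z\notin M_{\tau_1}$ and $M_{\tau_1}\ne\H$. Combined with the first two steps, $M_{\tau_1}$ is then a nontrivial closed hyperinvariant subspace. The only genuine subtlety is that $M_\tau$ is in general merely a (possibly non-closed) linear manifold, which is exactly why closedness must be imposed as a hypothesis here; deciding for which $\tau$ the set $M_\tau$ is closed, and even which values $\tau$ are admissible at all, is precisely what forces the detailed analysis of $\Lambda(T)$ carried out in the remainder of the paper.
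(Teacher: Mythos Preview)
Your argument is correct. Note, however, that the paper does not supply its own proof of this proposition: it is quoted from \cite{DY3} without proof, so there is nothing in the paper to compare your full argument against. The one piece the paper does reprove later is the commutant-invariance inequality $k_{Ax}\le k_x$ (Lemma~\ref{commute}), and your derivation of that step is identical in spirit to the paper's---commute $A$ through $R(\lambda)$, bound by $\|A\|\,\|R(\lambda)x\|$, and use $\ln\|A\|/\ln\|R(\lambda)\|\to0$.
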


Now we review known results about the calculations of the power set of quasinilpotent operators. In \cite{DY3}, Douglas and Yang proved that if $T\in \B(\H)$ with $T^n=0$ and $T^{n-1}\neq 0$, then $\Lambda(T)=\{\frac{j}{n}: j=1, 2, \cdots, n\}$. In \cite{JiLiu},
Ji and Liu showed that if $T$ is a strongly strictly cyclic quasinilpotent unilateral weighted shift, then $\Lambda(T)=\{1\}$.
Later, He and Zhu generalized Ji and Liu's above mentioned result by showing that if $T$ is either a strictly cyclic quasinilpotent unilateral weighted shift or  a strongly strictly cyclic quasinilpotent operator, then $\Lambda(T)=\{1\}$ \cite{HeZhu}. In contrast to the above discrete cases, in \cite{JiLiu}, Ji and Liu constructed a quasinilpotent operator $T\in \B(\H)$ with $\Lambda(T)=[0,1]$.
All of these results give a hint that the power set of a quasinilpotent operator seems always to be closed.

Comparing with the facts that $\sigma(A)$ is compact for every $A$ in $\B(\H)$ and each nonempty compact subset of $\bC$ is the spectrum of some operator in $\B(\H)$, one may naturally consider the following questions.

\begin{ques}\label{Question-closedness}
 Is $\Lambda(T)$ closed for each quasinilpotent operator $T\in \B(\H)$?
\end{ques}

\begin{ques}\label{Ques-not-closed}
 What kind of nonempty subset $\sigma$ of $[0,1]$ could be the power set of a quasinilpotent operator $T\in \B(\H)$?
\end{ques}

We next propose the notion of {\em right closed}.

\begin{defn}
A nonempty subset $X$ of $\bR$ is said to be right closed, if $\sup Y\in X$ for every nonempty bounded subset $Y$ of $X$.
\end{defn}

In this paper, we aim to investigate some fundamental properties of the power set. The first
goal of the present paper is to establish the followings.

\begin{theoremi}\label{A}
Suppose that $T\in\B(\H)$ is quasinilpotent. Then $\Lambda(T)$ is right closed.
\end{theoremi}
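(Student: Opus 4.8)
The plan is to dispose of the trivial case and reduce everything to one genuinely analytic statement. If $\sup\sigma\in\sigma$ there is nothing to prove (as $\sigma\subset\Lambda(T)$), so assume $\sup\sigma\notin\sigma$; then $s:=\sup\sigma>0$ and one may pick a strictly increasing sequence $0<s_1<s_2<\cdots$ in $\sigma$ with $s_n\to s$. Since each $s_n\in\Lambda(T)$, it suffices to produce a single nonzero vector $x$ with $k_x=s$. I will take $x=\sum_{n\ge 1}c_nx_n$, where $x_n$ is a unit vector with $k_{x_n}=s_n$ and the $c_n>0$ are rapidly decreasing scalars; the vectors $x_n$, the scalars $c_n$, and auxiliary ``witnessing scales'' $\mu_n\downarrow 0$ are to be chosen by an inductive interleaving process. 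Throughout write $r(\lambda)=\|(\lambda-T)^{-1}\|$, and use the two elementary consequences of $k_{x_n}=s_n$: for every $\ep>0$ one has $\|(\lambda-T)^{-1}x_n\|\le r(\lambda)^{s_n+\ep}$ for all $\lambda$ near $0$ (because $\ln r(\lambda)\to+\infty$), and there are $\lambda$'s arbitrarily close to $0$ with $\|(\lambda-T)^{-1}x_n\|\ge r(\lambda)^{s_n-\ep}$.

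For the inequality $k_x\le s$ I will, at the stage where $x_n$ is chosen, fix a radius $\delta_n\in(0,1)$ on which $r(\lambda)>1$ and $\|(\lambda-T)^{-1}x_n\|\le r(\lambda)^{(s+s_n)/2}\le r(\lambda)^s$, and set $\td R_n=\max\{r(\lambda):\delta_n\le|\lambda|\le 1\}<\infty$. Then for every $m$ and every $0<|\lambda|\le 1$ one has $\|(\lambda-T)^{-1}x_m\|\le r(\lambda)^s+\td R_m$ (either the small‑scale bound or the crude bound $\le r(\lambda)=\|(\lambda-T)^{-1}\|\le \td R_m$ applies). Hence, once the $c_n$ are chosen small enough that $\sum_n c_n<\infty$ and $\sum_n c_n\td R_n<\infty$, the triangle inequality gives $\|(\lambda-T)^{-1}x\|\le C_1 r(\lambda)^s+C_2$ near $0$, so $k_x\le s$.

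For $k_x\ge s$ I exploit an asymmetry in the bookkeeping. At stage $n$, \emph{after} $x_1,\dots,x_n$ and $c_1,\dots,c_n$ are fixed, choose $\mu_n$ with $0<|\mu_n|<\tfrac12\delta_n$ (so $\mu_n\to0$) small enough that: $\|(\mu_n-T)^{-1}x_n\|\ge r(\mu_n)^{s_n-\gamma_n}$ where $\gamma_n:=\tfrac12(s_n-s_{n-1})$ (and $\gamma_1:=\tfrac12 s_1$); the earlier vectors are dominated there, i.e. $\sum_{j<n}c_j\|(\mu_n-T)^{-1}x_j\|\le\tfrac14 c_n\|(\mu_n-T)^{-1}x_n\|$, which is achievable since $\|(\mu_n-T)^{-1}x_j\|\le r(\mu_n)^{s_j+\gamma_n/2}$ for $j<n$ at small scales while $s_j+\gamma_n/2<s_n-\gamma_n$; and $|\ln(c_n/2)|/\ln r(\mu_n)\le 1/n$. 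The \emph{later} vectors are neutralised the opposite way: when $c_m$ ($m>n$) is chosen, require among its smallness conditions that $c_m\le 2^{-(m-n)}\tfrac14\,c_n\|(\mu_n-T)^{-1}x_n\|\big/\|(\mu_n-T)^{-1}x_m\|$ — a positive quantity, since $\mu_n$ and $x_m$ are already fixed — so that $\sum_{m>n}c_m\|(\mu_n-T)^{-1}x_m\|\le\tfrac14 c_n\|(\mu_n-T)^{-1}x_n\|$. Combining, $\|(\mu_n-T)^{-1}x\|\ge\tfrac12 c_n\|(\mu_n-T)^{-1}x_n\|\ge\tfrac12 c_n r(\mu_n)^{s_n-\gamma_n}$, whence the ratio at $\mu_n$ is $\ge s_n-\gamma_n-1/n$; letting $n\to\infty$ (so $s_n\to s$, $\gamma_n\to 0$, $\mu_n\to0$) yields $k_x\ge s$. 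Arranging in addition that $c_1>\sum_{m\ge 2}c_m$ makes $x\ne0$, completing the construction.

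The main obstacle is exactly the mutual interference of the $x_n$ inside the sum: each $x_n$ only controls $\|(\lambda-T)^{-1}x_n\|$ up to $r(\lambda)^{s_n+\ep}$, with $s_n$ possibly far below $s$, whereas a later vector — or the whole tail $\sum_{m>n}c_m x_m$, whose $k$‑value may be as large as $s$ — can behave like $r(\lambda)^s$ and swamp $x_n$'s contribution along $x_n$'s own natural sequence; so a naive lower estimate $\|(\lambda-T)^{-1}x\|\gtrsim c_n\|(\lambda-T)^{-1}x_n\|$ is not available. The device that rescues the lower bound is the ordering of the inductive choices: past vectors are killed by taking the witnessing scale $\mu_n$ very small (using the already‑established values $k_{x_j}=s_j$, $j<n$), and future vectors are killed by taking their coefficients $c_m$ tiny relative to the finitely many fixed scales $\mu_1,\dots,\mu_n$. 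Verifying that the several competing smallness demands on the $c_n$ and the $\mu_n$ are jointly satisfiable, in a self‑consistent order, is the technical heart of the argument.
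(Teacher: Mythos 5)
Your proof is correct and takes essentially the same route as the paper's: both construct $x=\sum_n c_nx_n$ by an interleaved induction in which earlier vectors are suppressed at a very small witnessing point by means of their known $k$-values, later vectors are suppressed there by choosing their coefficients tiny relative to the finitely many data already fixed, and the bound $k_x\le s$ comes from the a priori estimates $\|(\lambda-T)^{-1}x_n\|\le\|(\lambda-T)^{-1}\|^{(s+s_n)/2}$ near $0$ (your uniform bound $\|(\lambda-T)^{-1}x_m\|\le\|(\lambda-T)^{-1}\|^{s}+\td R_m$ on the punctured unit disk is, if anything, a little cleaner than the paper's scale-by-scale splitting). The one slip is the parenthetical claim that $|\mu_n|<\tfrac12\delta_n$ forces $\mu_n\to0$ --- the radii $\delta_n$ need not shrink --- but this is repaired trivially by adding $|\mu_n|<\tfrac1n$ to your smallness conditions on $\mu_n$.
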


\begin{theoremi}\label{B}
Suppose that $\sigma$ is a right closed subset of $[0,1]$ containing $1$. Then there exists a quasinilpotent operator $T\in \B(\H)$ such that $\Lambda(T)=\sigma$.
\end{theoremi}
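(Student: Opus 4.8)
The plan is to reduce the problem to a countable target set and then to realize that set by an $\ell^2$-direct sum of scalar dilates of one fixed weighted shift. First I would show that a right closed $\sigma\subseteq[0,1]$ with $1\in\sigma$ always has the form $\sigma=\{\sup Y:\varnothing\neq Y\subseteq D\}$ for some countable $D\subseteq\sigma$ with $1\in D$: as a subspace of $\bR$, $\sigma$ is separable, hence has a countable dense subset $D_0$, and the only points of $\sigma$ that fail to be suprema of subsets of $D_0$ are those $x$ that are left-isolated in $\sigma$ (i.e. $\sigma\cap(x-\delta,x)=\varnothing$ for some $\delta>0$) but not isolated; by right closedness each such $x$ is the right endpoint of a maximal gap $(\gamma_x,x)$ of $\sigma$ with $\gamma_x=\sup(\sigma\cap[0,x))\in\sigma$, these gaps are pairwise disjoint, so only countably many such $x$ occur, and adjoining them together with $\min\sigma$ (when it exists) and $1$ to $D_0$ yields $D$. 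It then suffices, given $D=\{t_j\}_j$ with $t_1=1$, to build a quasinilpotent $T$ with $\Lambda(T)=\{\sup\{t_j:j\in A\}:\varnothing\neq A\}$.

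\emph{The building block.} Let $R$ be the unilateral weighted shift on $\ell^2$ with weights $w_n=\tfrac1{n+1}$ (with $(e_n)_{n\ge0}$ the standard basis, so $R^ke_0=\tfrac1{k!}e_k$ and $\|R^k\|=\tfrac1{k!}$). Then $R$ is bounded and quasinilpotent; a direct computation — or the cited theorem of He--Zhu, $R$ being strictly cyclic — gives $\Lambda(R)=\{1\}$; and, crucially, expanding $(\lambda-R)^{-1}=\sum_k\lambda^{-k-1}R^k$ gives $\|(\lambda-R)^{-1}\|\le|\lambda|^{-1}e^{1/|\lambda|}$ together with a matching lower bound from $(\lambda-R)^{-1}e_0$, so that
\[\ln\|(\lambda-R)^{-1}\|=\tfrac1{|\lambda|}\bigl(1+o(1)\bigr)\qquad(\lambda\to0),\]
which is super-linear in $-\ln|\lambda|$. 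The same estimates applied to $t_jR$ give $\|(\lambda-t_jR)^{-1}\|\le|\lambda|^{-1}e^{t_j/|\lambda|}$, $\ln\|(\lambda-t_jR)^{-1}\|=\tfrac{t_j}{|\lambda|}(1+o(1))$, and $\Lambda(t_jR)=\{1\}$ (Remark~\ref{basic remark}(3), or (4) if $t_j=0$).

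\emph{The operator and its power set.} Set $T=\bigoplus_j t_jR$. It is bounded, $\|T^k\|=\sup_j t_j^k\|R^k\|=\tfrac1{k!}$ so $T$ is quasinilpotent, and (using $t_1=1$) $\ln\|(\lambda-T)^{-1}\|=\ln\sup_j\|(\lambda-t_jR)^{-1}\|=\tfrac1{|\lambda|}(1+o(1))$. Fix $x=\bigoplus_j y_j\neq0$ and put $s^{*}=\sup\{t_j:y_j\neq0\}$, which lies in $\sigma$ since $\sigma$ is right closed. From $\|(\lambda-T)^{-1}x\|^2=\sum_j\|(\lambda-t_jR)^{-1}y_j\|^2\le|\lambda|^{-2}e^{2s^{*}/|\lambda|}\|x\|^2$ we get $k_x\le s^{*}$. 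For the reverse inequality, pick any $j$ with $y_j\neq0$; then $\|(\lambda-T)^{-1}x\|\ge\|(\lambda-t_jR)^{-1}y_j\|$, and factoring
\[\frac{\ln\|(\lambda-t_jR)^{-1}y_j\|}{\ln\|(\lambda-T)^{-1}\|}=\frac{\ln\|(\lambda-t_jR)^{-1}y_j\|}{\ln\|(\lambda-t_jR)^{-1}\|}\cdot\frac{\ln\|(\lambda-t_jR)^{-1}\|}{\ln\|(\lambda-T)^{-1}\|},\]
the second factor tends to $t_j$ while the first has $\limsup$ equal to $k_{y_j}(t_jR)=1$, whence $k_x\ge t_j$; letting $j$ range over all indices with $y_j\neq0$ gives $k_x\ge s^{*}$, so $k_x=s^{*}$. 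Since any support $A$ is realized (e.g. $y_j=2^{-j}e_0$ for $j\in A$ and $y_j=0$ otherwise defines a vector in the domain of $T$), we conclude $\Lambda(T)=\{\sup\{t_j:j\in A\}:\varnothing\neq A\}=\sigma$.

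\emph{Where the difficulty lies.} The heart of the matter is the choice of $R$: I need one quasinilpotent operator with \emph{trivial} power set $\{1\}$ whose resolvent nevertheless blows up like $e^{1/|\lambda|}$, so that scalar multiplication by $t$ multiplies $\ln\|(\lambda-tR)^{-1}\|$ by precisely $t$ to leading order; this is exactly what makes the summands $t_jR$ ``decouple'', forcing the exponential rate of any vector to be dictated solely by the largest weight $t_j$ in its support, the polynomial factor $|\lambda|^{-1}$ being irrelevant because the growth is super-polynomial. I expect the only genuinely fussy points to be the topological lemma of Step~1 (the left-isolated points of $\sigma$) and making the $\limsup$-of-a-product estimate above precise, neither of which looks serious; note also that Theorem~\ref{A} is not logically needed here, since the computation pins down each $k_x$ exactly.
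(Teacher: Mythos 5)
Your proposal is correct and is essentially the paper's own argument: the same reduction of $\sigma$ to a countable set whose set of suprema is $\sigma$ (Lemma \ref{sequence}), the same factorial-weight shift $A$ (your $R$) with $\Lambda(A)=\{1\}$ as building block, the same operator $T=\bigoplus_j t_jA$, and the same estimates--including the factorization of $\frac{\ln\|(\lambda-t_jA)^{-1}y_j\|}{\ln\|(\lambda-T)^{-1}\|}$ into two factors--yielding $k_x=\sup\{t_j:y_j\neq 0\}$. The only deviations are harmless streamlinings: your explicit bound $\|(\lambda-tA)^{-1}\|\le |\lambda|^{-1}e^{t/|\lambda|}$ together with the lower bound coming from $(\lambda-A)^{-1}e_0$ replaces the paper's Proposition \ref{arg}, Lemma \ref{increasing function} and Lemma \ref{quotient}, and you realize each point of $\sigma$ by choosing supports directly instead of invoking Theorem \ref{A}; only note that for $t_j=0$ the asymptotic should read $\ln\|(\lambda-t_jA)^{-1}\|=\ln\frac{1}{|\lambda|}$, which still makes the relevant ratio tend to $0=t_j$, so nothing breaks.
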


Note that $\sigma\doteq (\frac{1}{2}, 1]$ is a typical right closed but non-closed subset of $[0,1]$.
Thus, combining Theorem \ref{A} and Theorem \ref{B}, we give a complete answer to Question \ref{Question-closedness}.
Meanwhile, Theorem \ref{B} is also an answer to Question \ref{Ques-not-closed} under the mild assumption that $1\in \sigma$.
We suspect that this assumption can not be removed while every known calculable power set of the quasinilpotent operators always contains $1$.

In the second part of the paper, we focus on the power set of the Volterra operator on $L^2[0,1]$.
Throughout this paper, let $V$ be the Volterra operator on $L^2[0, 1]$ defined by
\[Vf(t)=\int_0^tf(s)\rd s, ~t\in[0,1],~\forall f\in L^2[0,1].\]
 $V$ is a distinguished example of the quasinilpotent operator, hence has been extensively studied. A classical reference is \cite{GK70}. The invariant subspace lattice of $V$ coincides with $\{N_t:t\in[0,1]\}$, where \[N_t=\{f\in L^2[0,1]:f(s)=0~\textrm{a.e. in } [0,t]\},\] and hence $V$ is unicellular (cf. \cite{Dav88,RR73,Sar74}). About the power set of $V$, we could
assert the following.

\begin{theoremi}\label{C}
 $\Lambda(V)=(0,1]$.
\end{theoremi}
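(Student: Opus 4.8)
The plan is to prove the two inclusions $\Lambda(V)\subseteq(0,1]$ and $(0,1]\subseteq\Lambda(V)$ separately, using the explicit analytic structure of $V$. For the first inclusion, I would use the classical fact that $\|(\lambda-V)^{-1}\|$ blows up at a precise rate as $\lambda\to 0$. Indeed, the resolvent of the Volterra operator is given by an explicit integral kernel: $(\lambda-V)^{-1}f(t)=\lambda^{-1}f(t)+\lambda^{-2}\int_0^t e^{(t-s)/\lambda}f(s)\,\rd s$, so one can obtain sharp asymptotics for $\ln\|(\lambda-V)^{-1}\|$ — it is known that this grows like $c/|\lambda|$ along appropriate rays, and in particular $\ln\|(\lambda-V)^{-1}\|\to+\infty$ like a constant times $|\lambda|^{-1}$ (up to lower-order terms). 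The key point is that $0\notin\Lambda(V)$: for $f\ne 0$ one needs a lower bound $\|(\lambda-V)^{-1}f\|\gtrsim e^{c'/|\lambda|}$ along some sequence $\lambda\to 0$ with $c'>0$, which follows because the kernel $e^{(t-s)/\lambda}$ produces genuine exponential growth whenever $f$ has mass near $t=1$; a localization argument reduces the general $f$ to this case since $N_t$ are the only invariant subspaces. Combined with Remark \ref{basic remark}(1), this gives $\Lambda(V)\subseteq(0,1]$, with $1=k_x$ attained (e.g. for suitable $x$, or by a general principle that $1\in\Lambda(T)$ always).

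For the reverse inclusion $(0,1]\subseteq\Lambda(V)$, the natural idea is to exploit unicellularity: the restrictions $V|_{N_t}$ and the compressions $V_t := P_{N_t^\perp}V|_{N_t^\perp}$ are, up to unitary equivalence and rescaling of the interval, again Volterra operators on $L^2[0,t]$ or $L^2[0,1-t]$, hence unitarily equivalent to $t V$ (resp. $(1-t)V$) by the change of variables $s\mapsto s/t$. By Remark \ref{basic remark}(3) the power set is dilation-invariant, so this self-similarity is the engine. Concretely, for a target value $\tau\in(0,1)$ I would take a vector $x$ supported on an interval $[a,1]$ and analyze $\|(\lambda-V)^{-1}x\|$: the resolvent norm $\|(\lambda-V)^{-1}\|$ is governed by the ``full length'' $1$, whereas the growth of $\|(\lambda-V)^{-1}x\|$ is governed only by the length of the support of $x$ together with the interaction via the exponential kernel. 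Because $\ln\|(\lambda-V)^{-1}\|\sim c|\lambda|^{-1}$ and the contribution of $x$ scales linearly with an ``effective length'', one expects $k_x$ to depend continuously and monotonically on that length, sweeping out all of $(0,1)$; a more refined choice of $x$ (e.g.\ a measure concentrating near $t=1$ at a tunable rate, or a weighted sum over a sequence of shrinking intervals) should allow one to hit each $\tau$ exactly rather than just an interval of values.

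The main obstacle I anticipate is making the lower bounds on $\|(\lambda-V)^{-1}x\|$ sharp enough — upper bounds of the form $\|(\lambda-V)^{-1}x\|\le\|(\lambda-V)^{-1}\|\,\|x\|$ are free, but to pin down $k_x$ one needs matching two-sided exponential asymptotics, and the $\limsup$ in the definition means one must control the direction in which $\lambda\to 0$ (the resolvent norm is largest along the negative real axis, where $e^{(t-s)/\lambda}$ oscillates, so the relevant estimates likely come from $\lambda$ negative real or from a well-chosen ray/sequence). One clean route is to diagonalize: the singular values of $(\lambda-V)^{-1}$, or equivalently the eigenvalues of $V$'s ``symbol'', can be written down, and for $x$ expanded in a suitable orthonormal basis adapted to the invariant subspace flag one can compute $\|(\lambda-V)^{-1}x\|^2$ as an explicit series and extract its growth rate. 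The second obstacle is the exact-value (as opposed to ``dense subset'') part: ensuring $\Lambda(V)$ is exactly $(0,1]$ and contains no isolated behavior or gaps requires either a direct construction of $x$ realizing each $\tau$, or an argument that $\Lambda(V)$ is an interval together with the endpoint analysis above; I would lean on the self-similarity/dilation invariance to show $\Lambda(V)$ is closed under the operations needed to interpolate, then combine with $0\notin\Lambda(V)$ and $1\in\Lambda(V)$ to conclude.
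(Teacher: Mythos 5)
Your overall plan has the right shape and in fact mirrors the paper's strategy: show $0\notin\Lambda(V)$, and realize every value in $(0,1]$ by vectors supported on $[\alpha,1]$ (the paper uses $g_\alpha=V\chi_{[\alpha,1]}$ and proves $k_{g_\alpha}=1-\alpha$). But there is a genuine gap at the heart of the first half. You assert that the kernel $e^{(t-s)/\lambda}$ ``produces genuine exponential growth whenever $f$ has mass near $t=1$,'' but this is precisely the statement that needs proof, and it is not a matter of where $f$ has mass: the danger is cancellation. For $\lambda>0$ small the relevant quantity is the Laplace-type integral $\Phi_{f,u}(\rho)=\int_0^u f(s)e^{(u-s)\rho}\,\rd s$ with $\rho=1/\lambda\to+\infty$, and a priori an oscillating $f$ could make this stay bounded even though the kernel is huge. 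The paper's resolution is a genuinely nontrivial complex-analysis argument: $\Phi_{f,u}$ is entire of exponential type, bounded on the imaginary axis and decaying on the negative real axis, so by Phragm\'en--Lindel\"of on quarter-planes plus Liouville, boundedness on the positive real axis would force $\Phi_{f,u}\equiv 0$, whence all moments $\int_0^u s^n f(s)\,\rd s$ vanish and $f\equiv 0$ on $[0,u]$ by Weierstrass approximation. Nothing in your sketch supplies this idea. Your proposed reduction of general $f\in L^2$ to the good case via ``localization\ldots since $N_t$ are the only invariant subspaces'' is also not a workable mechanism; the paper instead passes to $Vf$, which is continuous and nonzero, and uses the commutant inequality $k_f\ge k_{Vf}$.

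Two further points. First, you have the geometry of the resolvent backwards: for $V$ the norm $\|(\lambda-V)^{-1}f\|$ is exponentially large for $\lambda\to 0^+$ along the positive real axis (where $e^{(t-s)/\lambda}$ blows up) and merely polynomially large for $\lambda<0$ (where the kernel decays), so the ``relevant estimates'' come from $\operatorname{Re}(1/\lambda)\to+\infty$, not from the negative real axis. Second, the upper bound $k_{g_\alpha}\le 1-\alpha$ requires controlling the $\limsup$ over all complex approach directions; the paper does this by splitting into $\operatorname{Re}(\lambda)\le 0$ (where $\|(\lambda-V)^{-1}g_\alpha\|$ is uniformly bounded) and $\operatorname{Re}(\lambda)>0$, and then arguing by contradiction that along any sequence realizing the $\limsup$ one must have $\operatorname{Re}(1/\lambda_{n_k})\to+\infty$, after which two-sided estimates of $\|h_{\alpha,\lambda}\|$ against $\|h_{0,\lambda}\|$ give exactly $1-\alpha$. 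You correctly anticipate these as obstacles but do not overcome them, so the proposal as written does not constitute a proof.
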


The key ingredient of Theorem \ref{C} is to show that $0\notin \Lambda(V)$. We will use some tools in complex analysis and some rather complicated estimates in place to obtain it.

\section{The power set is right closed}

The main goal of this section is to present the proof of Theorem \ref{A}.

\begin{proof}[\textup{\textbf{Proof of Theorem \ref{A}}}]
To show that $\Lambda(T)$ is right closed, it suffices to prove that $t\in\Lambda(T)$ whenever $\{t_n\}_{n=1}^\infty\subset\Lambda(T)$ is a strictly increasing sequence of positive numbers with the limit $t\in\bR$.
The strategy is to select a sequence of nonzero vectors $\{x_n\}_{n=1}^\infty$ inductively such that  $k_{x_n}=t_n$ and $k_x=t$, where $x=\sum\limits_{n=1}^\infty x_n\in \H$.
We will carry out it step by step.

\begin{enumerate}

\item[\textsc{Step}] \textsc{1.}

Since $t_n\in\Lambda(T)$, we can choose a sequence $\{e_n\}_{n=1}^\infty$ of unit vectors with $k_{e_n}=t_n$, $\forall n$.
 Because of
\[k_{2^{n+2}e_n}=k_{e_n}=t_n<\frac{t_n+t_{n+1}}{2},\]
and
\[\underset{\lambda\to 0}{\lim}\|(\lambda-T)^{-1}\|=+\infty,\]
 we can pick a strictly decreasing sequence  $\{r_n\}_{n=1}^\infty$ in $(0,1)$ with $\lim\limits_{n\rightarrow\infty}r_n=0$ so that
\[\min\{\|(\lambda-T)^{-1}\|: 0\neq |\lambda|\leq r_1\}> 1\]
and
  \[\|(\lambda-T)^{-1}e_n\|<\frac{1}{2^{n+2}}\|(\lambda-T)^{-1}\|^{\frac{t_n+t_{n+1}}{2}},~~\forall~|\lambda|< r_n.\]

\item[\textsc{Step}] \textsc{2.}

We will find $\{x_n\}_{n=1}^\infty$ and $\{\lambda_n\}_{n=1}^\infty$ by induction as follows.

\begin{enumerate}
\item Induction part 1.
\begin{itemize}
\item[(1.1)] Set $s_1=\frac{1}{2}$ and $\|x_1\|=s_1e_1$, then $k_{x_1}=k_{e_1}=t_1>\frac{t_1}{2}$. So, one can find $\lambda_1$
with $0<|\lambda_1|<r_2$ such that
\[\|(\lambda_1-T)^{-1}x_1\|=\max\{\|(\lambda-T)^{-1}x_1\|:|\lambda|=|\lambda_1|\},\]
and
\[\ln\|(\lambda_1-T)^{-1}x_1\|>\frac{t_1}{2}\ln\|(\lambda_1-T)^{-1}\|.\]

\end{itemize}

\item Induction part 2.
 \begin{itemize}
\item[(2.1)] Since
\[\theta_1\doteq \min\{\frac{1}{2^{2+1}}\|(\lambda-T)^{-1}x_1\|: |\lambda|\in[|\lambda_1|,1]\}>0,\]
and
\[\max\{\|(\lambda-T)^{-1}e_2\|:|\lambda|\geq|\lambda_1|\}<+\infty,\]
 we can choose  $s_2$ in $(0, \frac{1}{2^2})$ so that $x_2=s_2e_2$ satisfies that
 \[\max\{\|(\lambda-T)^{-1}x_2\|:|\lambda|\geq|\lambda_1|\}
 \leq\theta_1.\]
\item[(2.2)] Because of $k_{x_2}=k_{e_2}=t_2>\frac{t_1+t_2}{2}$, we can
 pick $\lambda_2$ with $0<|\lambda_2|<\min\{|\lambda_1|, r_3\}$ so that the followings hold.
 \[\|(\lambda_2-T)^{-1}x_2\|=\max\{\|(\lambda-T)^{-1}x_2\|:|\lambda|=|\lambda_2|\},\]
 and
 \[\ln\|(\lambda_2-T)^{-1}x_2\|>\frac{t_1+t_2}{2}\ln\|(\lambda_2-T)^{-1}\|.\]
\end{itemize}

\item Induction part 3.
\begin{itemize}
\item[(3.1)] Since
\[\theta_2\doteq \min\{\frac{1}{2^{3+1}}\|(\lambda-T)^{-1}x_1\|, \frac{1}{2^{3+2}}\|(\lambda-T)^{-1}x_2\|:|\lambda|\in[|\lambda_2|,1]\}>0,\]
and
\[\max\{\|(\lambda-T)^{-1}e_3\|:|\lambda|\geq|\lambda_2|\}<+\infty,\]
we can choose  $s_3$ in $(0, \frac{1}{2^3})$ so that $x_3=s_3e_3$ satisfies that
 \[\max\{\|(\lambda-T)^{-1}x_3\|:|\lambda|\geq|\lambda_2|\}
 \leq\theta_2.\]
\item[(3.2)] Because of $k_{x_3}=k_{e_3}=t_3>\frac{t_2+t_3}{2}$, we can pick $\lambda_3$ with $0<|\lambda_3|<\min\{|\lambda_2|, r_4\}$ so that the followings hold.
    \[\|(\lambda_3-T)^{-1}x_3\|=\max\{\|(\lambda-T)^{-1}x_3\|:|\lambda|=|\lambda_3|\}\]
    and
    \[\ln\|(\lambda_3-T)^{-1}x_3\|>\frac{t_2+t_3}{2}\ln\|(\lambda_3-T)^{-1}\|.\]
\end{itemize}

\item Assume that we have completed the Induction part $j$, $1\leq j\leq k$. We will next get the
Induction part ($k+1$).

\begin{itemize}
\item[(($k+$1).1)] Since
\[\theta_k\doteq \min\{ \frac{1}{2^{k+1+j}}\|(\lambda-T)^{-1}x_j\|:|\lambda|\in[|\lambda_k|, 1], 1\leq j\leq k\}>0\]
and since \[\max\{\|(\lambda-T)^{-1}e_{k+1}\|:|\lambda|\geq|\lambda_k|\}<+\infty,\]
 We can choose  $s_{k+1}$ in $(0, \frac{1}{2^{k+1}})$ so that $x_{k+1}=s_{k+1}e_{k+1}$ satisfies that
 \[\max\{\|(\lambda-T)^{-1}x_{k+1}\|:|\lambda|\geq|\lambda_k|\}
 \leq\theta_k.\]
\item[(($k+$1).2)] Because of $k_{x_{k+1}}=k_{e_{k+1}}=t_{k+1}>\frac{t_k+t_{k+1}}{2}$, we can pick $\lambda_{k+1}$ with $0<|\lambda_{k+1}|<\min\{|\lambda_k|, r_{k+2}\}$ so that the followings hold.
    \[\|(\lambda_{k+1}-T)^{-1}x_{k+1}\|=\max\{\|(\lambda-T)^{-1}x_{k+1}\|:|\lambda|=|\lambda_{k+1}|\},\]
 and
\[\ln\|(\lambda_{k+1}-T)^{-1}x_{k+1}\|>\frac{t_k+t_{k+1}}{2}\ln\|(\lambda_{k+1}-T)^{-1}\|.\]
\end{itemize}

\end{enumerate}

\item[\textsc{Step}] \textsc{3.}

Note that
\[1=\frac{1}{2}+\sum_{k=2}^\infty \frac{1}{2^k}>\|x_1\|+\sum_{k=2}^\infty\|x_k\|> \|x_1\|-\sum_{k=2}^\infty\|x_k\|>\frac{1}{2}-\sum_{k=2}^\infty \frac{1}{2^k}\geq 0.\]
It follows that $x\doteq \sum\limits_{k=1}^\infty x_k\in \H$ with $\|x\|\in (0,1)$. We will next show that $k_x\leq t$.

For given $\lambda\ne 0$ with $|\lambda|<|\lambda_1|$, there exists a unique natural number $n=n(\lambda)$ such that $|\lambda|\in[|\lambda_{n+1}|, |\lambda_n|)$. Note that $\lim\limits_{\lambda\rightarrow 0} n(\lambda)=+\infty$.
\[\begin{array}{rl}\frac{\ln \|(\lambda-T)^{-1}x\|}{\ln\|(\lambda-T)^{-1}\|}&=\frac{\ln\|\sum\limits_{k=1}^\infty (\lambda-T)^{-1}x_k\|}{\ln\|(\lambda-T)^{-1}\|}\\
&\leq\frac{\ln[\sum\limits_{k=1}^{n+1}\|(\lambda-T)^{-1}x_k\|+\sum\limits_{k=n+2}^\infty\|(\lambda-T)^{-1}x_k\|]}{\ln\|(\lambda-T)^{-1}\|}\\
&\leq\frac{\ln[\sum\limits_{k=1}^{n+1}\|(\lambda-T)^{-1}x_k\|+\sum\limits_{k=n+2}^\infty\theta_{k-1}]}{\ln\|(\lambda-T)^{-1}\|}\\
&\leq\frac{\ln[\sum\limits_{k=1}^{n+1}\|(\lambda-T)^{-1}x_k\|+\sum\limits_{k=n+2}^\infty\frac{1}{2^{k+n+1}}\|(\lambda-T)^{-1}x_{n+1}\|]}{\ln\|(\lambda-T)^{-1}\|}\\
&\leq\frac{\ln[\sum\limits_{k=1}^{n+1}s_k\|(\lambda-T)^{-1}e_k\|+\sum\limits_{k=n+2}^\infty\frac{s_{n+1}}{2^{k+n+1}}\|(\lambda-T)^{-1}e_{n+1}\|]}{\ln\|(\lambda-T)^{-1}\|}\\
&\leq\frac{\ln[\sum\limits_{k=1}^{n+1}\frac{s_k}{2^{k+2}}\|(\lambda-T)^{-1}\|^{\frac{t_k+t_{k+1}}{2}}+\sum\limits_{k=n+2}^\infty\frac{s_{n+1}}{2^{k+n+1}}\frac{1}{2^{n+1+2}}\|(\lambda-T)^{-1}\|^{\frac{t_{n+1}+t_{n+2}}{2}}]}{\ln\|(\lambda-T)^{-1}\|}\\
&\leq\frac{\ln[\sum\limits_{k=1}^{n+1}\frac{s_k}{2^{k+2}}\|(\lambda-T)^{-1}\|^t+\frac{s_{n+1}}{2^{2n+2}}\frac{1}{2^{n+1+2}}\|(\lambda-T)^{-1}\|^t]}{\ln\|(\lambda-T)^{-1}\|}\\
&\leq\frac{\ln[\sum\limits_{k=1}^{n+1}\frac{1}{2^{k+2}}\|(\lambda-T)^{-1}\|^t+\frac{1}{2^{2n+2}}\frac{1}{2}\|(\lambda-T)^{-1}\|^t]}{\ln\|(\lambda-T)^{-1}\|}\\
&\leq t.\end{array}\]

So, $k_x\leq t$.

\item[\textsc{Step}] \textsc{4.}

We finally will show $k_x=t$.
\[\begin{array}{rl}\frac{\ln \|(\lambda_{n+1}-T)^{-1}x\|}{\ln\|(\lambda_{n+1}-T)^{-1}\|}&=\frac{\ln\|\sum\limits_{k=1}^\infty (\lambda_{n+1}-T)^{-1}x_k\|}{\ln\|(\lambda_{n+1}-T)^{-1}\|}\\
&\geq\frac{\ln[\|(\lambda_{n+1}-T)^{-1}x_{n+1}\|-(\|\sum\limits_{k=1}^{n}\|(\lambda_{n+1}-T)^{-1}x_k\|+\sum\limits_{k=n+2}^\infty\|(\lambda_{n+1}-T)^{-1}x_k\|)]}{\ln\|(\lambda_{n+1}-T)^{-1}\|}\\
&\geq\frac{\ln[\|(\lambda_{n+1}-T)^{-1}x_{n+1}\|-(\|\sum\limits_{k=1}^{n}\|(\lambda_{n+1}-T)^{-1}x_k\|+\sum\limits_{k=n+2}^\infty\theta_{k-1})]}{\ln\|(\lambda_{n+1}-T)^{-1}\|}\\
&\geq\frac{\ln[\|(\lambda_{n+1}-T)^{-1}x_{n+1}\|-(\sum\limits_{k=1}^n\|(\lambda_{n+1}-T)^{-1}x_k\|
        +\sum\limits_{k=n+2}^\infty\frac{1}{2^{k+n+1}}\|(\lambda_{n+1}-T)^{-1}x_{n+1}\|)]}{\ln\|(\lambda_{n+1}-T)^{-1}\|}\\
&=\frac{\ln[(1-\frac{1}{2^{2n+2}})\|(\lambda_{n+1}-T)^{-1}x_{n+1}\|-\sum\limits_{k=1}^ns_k\|(\lambda_{n+1}-T)^{-1}e_k\|]}{\ln\|(\lambda_{n+1}-T)^{-1}\|}\\
&\geq\frac{\ln[(1-\frac{1}{2^{2n+2}})\|(\lambda_{n+1}-T)^{-1}\|^{\frac{t_n+t_{n+1}}{2}}-\sum\limits_{k=1}^n\frac{s_k}{2^{k+2}}\|(\lambda_{n+1}-T)^{-1}\|^{\frac{t_k+t_{k+1}}{2}}]}{\ln\|(\lambda_{n+1}-T)^{-1}\|}\\
&\geq\frac{\ln[(1-\frac{1}{2^{2n+2}})\|(\lambda_{n+1}-T)^{-1}\|^{\frac{t_n+t_{n+1}}{2}}-\sum\limits_{k=1}^n\frac{s_k}{2^{k+2}}\|(\lambda_{n+1}-T)^{-1}\|^{\frac{t_n+t_{n+1}}{2}}]}{\ln\|(\lambda_{n+1}-T)^{-1}\|}\\
&\geq\frac{\ln[(1-\frac{1}{2^{2n+2}})\|(\lambda_{n+1}-T)^{-1}\|^{\frac{t_n+t_{n+1}}{2}}-\sum\limits_{k=1}^n\frac{1}{2^{k+2}}\|(\lambda_{n+1}-T)^{-1}\|^{\frac{t_n+t_{n+1}}{2}}]}{\ln\|(\lambda_{n+1}-T)^{-1}\|}\\
&\geq\frac{t_n+t_{n+1}}{2}+\frac{\ln\frac{11}{16}}{\ln\|(\lambda_{n+1}-T)^{-1}\|}.\end{array}\]
So, $k_x\geq t$. And hence $k_x=t$.
\end{enumerate}

The proof is completed.
\end{proof}

\section{Realizability of right closed subset of $[0,1]$}

Thanks to Theorem \ref{A}, we now know that $\Lambda(T)$ is a nonempty right closed subset of $[0,1]$ for a quasinilpotent operator $T\in \B(\H)$.
Then it is quite natural to ask that whether each nonempty right closed subset of $[0,1]$ could be realizable?  More precisely, for a given nonempty right closed subset $\sigma$ of $[0,1]$, is it possible to find a quasinilpotent $T\in \B(\H)$ such that
$\Lambda(T)=\sigma$? Under the mild assumption that $1\in \sigma$, Theorem \ref{B} provides an positive answer to this question.
The proof of Theorem \ref{B} will be carried out in this section.

Throughout this section, let $A$ be the weighted shift with weight sequence $\{\frac{1}{n}\}_{n=1}^\infty$, namely, there exists an orthonormal basis $\{e_n\}_{n=0}^\infty$ of $\H$ such that $A$ admits presenting matrix as
$$
\left[\begin{array}{ccccc}
0& & & & \\
1&0& & & \\
 &\frac{1}{2}&0& & \\
 & &\frac{1}{3}&0& \\
 & & &\ddots&\ddots
 \end{array}\right].
$$


By \cite[Corollary 3.12]{JiLiu}, $\Lambda(A)=\{1\}$. Note that $\Lambda(0)=\{1\}$, hence by Remark \ref{basic remark}(3),
we have
\begin{lem}\label{rA}
 $\Lambda(rA)=\{1\}$, $\forall r\geq 0$.
\end{lem}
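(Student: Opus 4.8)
The statement to prove is Lemma~\ref{rA}: $\Lambda(rA)=\{1\}$ for all $r\geq 0$. The plan is to invoke the two facts that are stated in the excerpt immediately before the lemma, namely that $\Lambda(A)=\{1\}$ (by \cite[Corollary~3.12]{JiLiu}) and that $\Lambda(0)=\{1\}$ (Remark~\ref{basic remark}(4)), together with the dilation-invariance of the power set recorded in Remark~\ref{basic remark}(3).

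First I would dispose of the degenerate case $r=0$: then $rA=0$, so $\Lambda(rA)=\Lambda(0)=\{1\}$ by Remark~\ref{basic remark}(4). Next, for $r>0$, the operator $rA$ is a nonzero scalar multiple of $A$, so Remark~\ref{basic remark}(3) gives $\Lambda(rA)=\Lambda(A)$. Finally, $\Lambda(A)=\{1\}$ by \cite[Corollary~3.12]{JiLiu}, since $A$ is the quasinilpotent weighted shift with weights $\{1/n\}_{n=1}^\infty$, which is one of the operators covered there (one should note in passing that $A$ is indeed quasinilpotent — its weights tend to $0$, so its spectral radius is $0$). Combining the two cases yields $\Lambda(rA)=\{1\}$ for every $r\geq 0$.

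There is essentially no obstacle here: the lemma is a two-line consequence of results already in hand, and the only thing to be careful about is that dilation-invariance in Remark~\ref{basic remark}(3) requires the scalar to be nonzero, which is why the case $r=0$ has to be handled separately via $\Lambda(0)=\{1\}$. The role of this lemma in the paper is presumably as a building block for the construction proving Theorem~\ref{B}: one will want to form direct sums (or similar constructions) of scaled copies $r_k A$ whose individual power sets are all $\{1\}$, so that the scaling parameters $r_k$ can be tuned to control the resolvent growth rates without changing the power set of each summand.
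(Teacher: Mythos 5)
Your proposal is correct and follows essentially the same route as the paper: the authors likewise combine $\Lambda(A)=\{1\}$ from \cite[Corollary 3.12]{JiLiu} with the dilation invariance of Remark \ref{basic remark}(3) for $r>0$, and note $\Lambda(0)=\{1\}$ for the case $r=0$. Your explicit separation of the $r=0$ case is exactly the care the paper's one-line argument implicitly relies on.
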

We will use the above lemma later on.

\begin{lem}\label{lower and upper bound of norm}
Given $t>0,r>0$,
\[\frac{\sqrt{6}}{\pi r}(e^{rt}-1)\leq \|(\frac{1}{t}-rA)^{-1}\|\leq te^{rt}.\]
\end{lem}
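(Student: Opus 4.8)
\textbf{Proof plan for Lemma \ref{lower and upper bound of norm}.}
The idea is to compute $(\frac{1}{t}-rA)^{-1}$ explicitly using the structure of the weighted shift $A$, and then estimate the resulting operator norm. First I would write $\frac{1}{t}-rA = \frac{1}{t}(I - trA)$, so that $(\frac{1}{t}-rA)^{-1} = t\sum_{k=0}^\infty (tr)^k A^k$, the series converging in norm because $A$ is quasinilpotent. The key observation is that $A^k$ shifts $e_n$ to a multiple of $e_{n+k}$: since $A e_n = \frac{1}{n+1} e_{n+1}$ (reading off the matrix, with the basis indexed from $0$), one gets $A^k e_n = \frac{n!}{(n+k)!} e_{n+k}$. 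Hence $(\frac{1}{t}-rA)^{-1} e_n = t\sum_{k=0}^\infty (tr)^k \frac{n!}{(n+k)!} e_{n+k}$, and because distinct $n$ give vectors supported on disjoint shifted tails, the operator norm is the supremum over $n\ge 0$ of the $\ell^2$-norm of the coefficient sequence $\big(t (tr)^k \frac{n!}{(n+k)!}\big)_{k\ge0}$.

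For the \emph{upper bound}, I would simply bound the $\ell^2$-norm by the $\ell^1$-norm of the coefficient sequence:
\[
\Big\|\big(\tfrac1t-rA\big)^{-1} e_n\Big\| \le t\sum_{k=0}^\infty (tr)^k \frac{n!}{(n+k)!}.
\]
Since $\frac{n!}{(n+k)!} \le \frac{1}{k!}$ for all $n\ge 0$ (indeed $\frac{n!}{(n+k)!}=\frac{1}{(n+1)(n+2)\cdots(n+k)} \le \frac{1}{k!}$ because each factor $n+j\ge j$), this is at most $t\sum_{k=0}^\infty \frac{(tr)^k}{k!} = t e^{tr}$, giving exactly the claimed upper bound $t e^{rt}$. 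The supremum over $n$ is therefore $\le t e^{rt}$.

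For the \emph{lower bound}, the largest coefficients occur at $n=0$, where $\frac{n!}{(n+k)!}=\frac{1}{k!}$, so the norm is at least the $\ell^2$-norm of $\big(t\frac{(tr)^k}{k!}\big)_{k\ge0}$, namely $t\big(\sum_{k=0}^\infty \frac{(tr)^{2k}}{(k!)^2}\big)^{1/2}$. I then need to show this is $\ge \frac{\sqrt6}{\pi r}(e^{rt}-1)$, equivalently $\sum_{k\ge 0}\frac{x^{2k}}{(k!)^2} \ge \frac{6}{\pi^2}\big(\frac{e^x-1}{x}\big)^2$ with $x=rt$. Writing $\frac{e^x-1}{x} = \sum_{k\ge 0}\frac{x^k}{(k+1)!}$ and squaring, the Cauchy product gives $\big(\frac{e^x-1}{x}\big)^2 = \sum_{m\ge0} c_m x^m$ with $c_m = \sum_{i+j=m}\frac{1}{(i+1)!(j+1)!}$. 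Comparing coefficient-wise against $\sum_k \frac{x^{2k}}{(k!)^2}$ term by term looks awkward because of parity, so instead I would use a cleaner comparison: by Cauchy–Schwarz applied to the $\ell^2$ sequence and the fixed $\ell^2$ sequence $(\frac{1}{k+1})_{k\ge0}$,
\[
\frac{e^x-1}{x}=\sum_{k\ge0}\frac{x^k}{(k+1)!}=\sum_{k\ge0}\frac{1}{k+1}\cdot\frac{x^k}{k!}\le\Big(\sum_{k\ge0}\frac{1}{(k+1)^2}\Big)^{1/2}\Big(\sum_{k\ge0}\frac{x^{2k}}{(k!)^2}\Big)^{1/2}=\frac{\pi}{\sqrt6}\Big(\sum_{k\ge0}\frac{x^{2k}}{(k!)^2}\Big)^{1/2},
\]
using $\sum_{k\ge1}k^{-2}=\pi^2/6$. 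Rearranging gives exactly $\big(\sum_k \frac{(rt)^{2k}}{(k!)^2}\big)^{1/2}\ge \frac{\sqrt6}{\pi}\cdot\frac{e^{rt}-1}{rt}$, hence $\|(\frac1t-rA)^{-1}\|\ge t\cdot\frac{\sqrt6}{\pi}\cdot\frac{e^{rt}-1}{rt}=\frac{\sqrt6}{\pi r}(e^{rt}-1)$.

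The only mildly delicate point is confirming the norm of a weighted-shift resolvent really is the supremum over $n$ of the $\ell^2$-norms of the per-basis-vector images; this follows because $\{A^k e_n\}$ for fixed $n$ are mutually orthogonal and the tails $\{e_{n+k}:k\ge0\}$ for different $n$ overlap, but one checks that for an operator of the form $\sum_k c_k A^k$ (a function of a weighted shift), $\|Sx\|^2 = \sum_n |x_n|^2\|S e_n\|^2$ fails in general — so I should instead bound $\|S\|$ above by $\sup_n \|S e_n\|_{\ell^1\text{-coeff}}$ via the triangle inequality on shifted copies and bound it below simply by $\|S e_0\|$. Both of those are elementary, so I expect no real obstacle; the "hard part" is merely packaging the Cauchy–Schwarz trick with the Basel sum cleanly.
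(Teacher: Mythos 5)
Your proof is correct and follows essentially the same route as the paper: the upper bound comes from the Neumann series together with $\|(rA)^k\|=r^k/k!$, and the lower bound from testing against $e_0$ and pairing with the vector $\sum_k e_k/(k+1)$ — your Cauchy--Schwarz step with $\sum_k (k+1)^{-2}=\pi^2/6$ is exactly the paper's inequality $\|Se_0\|\geq |\langle Se_0,x\rangle|/\|x\|$ for $x=\sum_k \frac{1}{k+1}e_k$. Your closing remark also correctly resolves the only subtlety (bounding $\|S\|$ above by $\sum_k |c_k|\,\|A^k\|$ rather than by $\sup_n\|Se_n\|$), so there is no gap.
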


\begin{proof}
 Set $x=\sum\limits_{k=0}^\infty\frac{1}{k+1}e_k$. Then
\[\|x\|=\sqrt{\sum\limits_{k=0}^\infty\frac{1}{(k+1)^2}}=\sqrt{\frac{\pi^2}{6}}.\]
Hence, as
\[(\frac{1}{t}-rA)^{-1}=\sum\limits_{n=0}^\infty t^{n+1}(rA)^n,\]
 we have
\begin{align*}
\|(\frac{1}{t}-rA)^{-1}\|
&\geq \frac{1}{\|x\|}|((\frac{1}{t}-rA)^{-1}e_0,x)|\\
&=\frac{1}{\|x\|}|(\sum\limits_{n=0}^\infty t^{n+1}(rA)^ne_0,\sum\limits_{k=0}^\infty \frac{e_k}{k+1})|\\
&=\frac{1}{\|x\|}|(\sum\limits_{n=0}^\infty\frac{t^{n+1}r^n}{n!}e_n,\sum\limits_{k=0}^\infty \frac{e_k}{k+1})|\\
&=\frac{1}{\|x\|}\sum\limits_{n=0}^\infty \frac{t^{n+1}r^{n}}{(n+1)!}\\
&=\frac{1}{\|x\|r}(e^{rt}-1).
\end{align*}

On the other side, and notice that
\[\|(rA)^n\|=\frac{r^n}{n!},~~n\in \bN.\]
So
\[\|(\frac{1}{t}-rA)^{-1}\|\leq \sum\limits_{n=0}^\infty\frac{t^{n+1}r^{n}}{n!}=te^{rt}.\]

\end{proof}

\begin{prop}\label{arg}
$\|(z-rA)^{-1}\|=\|(|z|-rA)^{-1}\|$, $\forall z\ne 0$, $r\geq 0$.
\end{prop}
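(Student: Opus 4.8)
The plan is to exploit the weighted-shift structure of $A$ directly. Since $A$ is a unilateral weighted shift with respect to the orthonormal basis $\{e_n\}_{n=0}^\infty$, for each unimodular scalar $\omega$ the operator $A$ is unitarily equivalent to $\omega A$; indeed, if $U_\omega$ is the diagonal unitary with $U_\omega e_n = \omega^n e_n$, then a direct computation on basis vectors gives $U_\omega A U_\omega^* = \bar\omega A$, equivalently $U_\omega^* (\omega A) U_\omega = A$. Writing $z = |z|\omega$ with $\omega = z/|z|$ unimodular, I would then compute
\[
(z - rA)^{-1} = (|z|\omega - rA)^{-1} = \bigl(|z|\omega - r\,U_\omega^*(\omega A)U_\omega\bigr)^{-1}.
\]

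Next I would factor out $\omega$ and conjugate. Since $|z|\omega - r\omega U_\omega^* A U_\omega = \omega\,U_\omega^*(|z| - rA)U_\omega$ — here using that $U_\omega$ commutes with the scalar $|z|$ and that $U_\omega^* (\omega A) U_\omega = A$ — we get
\[
(z - rA)^{-1} = \omega^{-1}\, U_\omega^*\,(|z| - rA)^{-1}\,U_\omega .
\]
Taking operator norms and using that $U_\omega$ is unitary and $|\omega^{-1}| = 1$ yields $\|(z-rA)^{-1}\| = \|(|z|-rA)^{-1}\|$, which is exactly the claim. One should note the trivial degenerate cases: if $r = 0$ the statement reads $1/|z| = 1/|z|$, and if $z$ is outside $\sigma(rA)=\{0\}$ — which is automatic since $z\neq 0$ and $rA$ is quasinilpotent — the resolvents in question are genuinely bounded operators, so there is no issue of existence.

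The only mild subtlety, and the step I would check most carefully, is the conjugation identity $U_\omega A U_\omega^* = \bar\omega A$: this must be verified on the basis, $U_\omega A U_\omega^* e_n = U_\omega A (\bar\omega^n e_n) = \bar\omega^n\, U_\omega (w_n e_{n+1}) = \bar\omega^n w_n \omega^{n+1} e_{n+1} = \omega w_n e_{n+1} = \omega (A e_n)$, so in fact $U_\omega A U_\omega^* = \omega A$ and hence $U_\omega^*(\omega A)U_\omega = A$; one just needs to keep the powers of $\omega$ bookkept correctly (the weights $w_n$ play no role since they are scalars). Everything else is formal manipulation of a convergent Neumann series or, more cleanly, of bounded invertible operators, and there is no real obstacle. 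Alternatively, and perhaps even more transparently, one can avoid resolvents entirely by expanding $(z-rA)^{-1} = \sum_{n\ge 0} z^{-n-1} r^n A^n$ and noting $\|\sum_n z^{-n-1} r^n A^n\| = \|\sum_n |z|^{-n-1} r^n A^n\|$ because $\|A^n\| = \prod_{k=1}^n \tfrac1k$ and the same diagonal-unitary conjugation absorbs the phases $z^{-n-1}$ term by term; I would present whichever version is shortest in context.
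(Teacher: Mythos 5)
Your proof is correct and is essentially the paper's own argument: the authors use exactly the same diagonal unitary ($W$ with $We_n=e^{in\theta}e_n$, i.e.\ your $U_\omega$) to absorb the phase of $z$, carrying out the cancellation term by term in the Neumann series $\sum_n z^{-n-1}(rA)^n$ rather than at the level of the resolvent identity, which is the variant you mention at the end. Your sign bookkeeping ($U_\omega AU_\omega^*=\omega A$, hence $z-rA=\omega\,U_\omega^*(|z|-rA)U_\omega$) checks out, so there is nothing to add.
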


\begin{proof}
Write $z=|z|e^{i\theta}$, where $\theta\in [0,2\pi)$. Set
\[W(\sum_{n=0}^\infty \alpha_n e_n)=\sum_{n=0}^\infty \alpha_n e^{in\theta}e_n,\]
where $\sum\limits_{n=0}^\infty \alpha_n e_n\in \H.$
Then $W$ is a unitary and
\[e^{in\theta}A^n=WA^nW^*,~~n\in \bN.\]
Therefore,
\[W(z-rA)^{-1}W^*=\sum_{n=0}^\infty \frac{W(rA)^nW^*}{z^{n+1}}=\frac{1}{z}\sum_{n=0}^\infty\frac{(e^{ i\theta})^n(rA)^n}{|z|^n(e^{ i\theta})^n}=\frac{1}{z}\sum_{n=0}^\infty\frac{(rA)^n}{|z|^n}.\]
Then
\[\|(z-rA)^{-1}\|=\|W(z-rA)^{-1}W^*\|=\frac{1}{|z|}\|\sum_{n=0}^\infty\frac{(rA)^n}{|z|^n}\|=
\|\sum_{n=0}^\infty\frac{(rA)^n}{|z|^{n+1}}\|=\|(|z|-rA)^{-1}\|.\]


\end{proof}

\begin{lem}\label{increasing function}
If $r_1>r_2\geq 0$, then $\|(z-r_1A)^{-1}\|\geq \|(z-r_2A)^{-1}\|$, $\forall z\ne 0$.
\end{lem}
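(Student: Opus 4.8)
The plan is to reduce the statement to the case of a positive real spectral parameter and then exploit the fact that $A$, being a weighted shift with nonnegative weights, is represented by a matrix with nonnegative entries.

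First I would invoke Proposition~\ref{arg} to reduce the claim to showing $\|(a-r_1A)^{-1}\|\ge\|(a-r_2A)^{-1}\|$ for every real $a>0$. For such $a$ the Neumann series $(a-rA)^{-1}=\sum_{n=0}^\infty a^{-(n+1)}r^nA^n$ converges in operator norm, since $\sum_{n=0}^\infty a^{-(n+1)}r^n\|A^n\|=\sum_{n=0}^\infty a^{-(n+1)}r^n/n!<\infty$. Hence, with respect to the orthonormal basis $\{e_n\}_{n\ge0}$, the $(j,k)$-entry of $(a-rA)^{-1}$ equals $\sum_{n\ge0}a^{-(n+1)}r^n(A^n)_{jk}$. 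Each power $A^n$ has nonnegative entries (indeed $A^ne_k=\frac{k!}{(k+n)!}e_{k+n}$), so, writing $B=(a-r_2A)^{-1}$ and $C=(a-r_1A)^{-1}$, the inequalities $0\le r_2^n\le r_1^n$ give $0\le B_{jk}\le C_{jk}$ for all $j,k$. The degenerate case $r_2=0$ is included, as then $B=a^{-1}I$ still has nonnegative entries.

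Next I would apply the elementary principle that entrywise domination between operators with nonnegative matrices entails the corresponding inequality of operator norms. Given $x=\sum_kx_ke_k\in\H$, set $y=\sum_k|x_k|e_k$, so that $\|y\|=\|x\|$. Since $B_{jk}\ge0$, applying the triangle inequality in each coordinate yields $\|Bx\|\le\|By\|$; and since $0\le B_{jk}\le C_{jk}$ while $y$ has nonnegative coordinates, $By$ and $Cy$ have nonnegative coordinates with $(By)_j\le(Cy)_j$, so $\|By\|\le\|Cy\|\le\|C\|\,\|y\|=\|C\|\,\|x\|$. Therefore $\|Bx\|\le\|C\|\,\|x\|$ for every $x$, i.e.\ $\|B\|\le\|C\|$, which is exactly the assertion. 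The argument is short, and I do not expect a genuine obstacle; the only steps deserving a line of justification are the coordinatewise reading of the resolvent (legitimate precisely because the Neumann series converges in norm) and the norm-monotonicity of nonnegative matrices used in the last step.
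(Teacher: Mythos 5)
Your proof is correct, but it is a genuinely different argument from the one in the paper. You reduce to a positive real spectral parameter via Proposition~\ref{arg} and then exploit the sign structure of $A$: since $A^ne_k=\frac{k!}{(k+n)!}e_{k+n}$ has nonnegative matrix entries and the Neumann series $(a-rA)^{-1}=\sum_n a^{-(n+1)}r^nA^n$ converges in norm, the resolvent entries are nonnegative and monotone in $r$, and the standard principle that entrywise domination between nonnegative matrices forces the same inequality of operator norms (your $x\mapsto y=\sum_k|x_k|e_k$ argument) finishes the proof; the degenerate case $r_2=0$ is handled correctly. The paper argues instead by soft complex analysis: it picks a norming functional $\phi$ via Hahn--Banach, applies the maximum modulus principle to the entire function $F(w)=\phi((1-wA)^{-1})$ to show that $r\mapsto\max_{|w|=r}\|(1-wA)^{-1}\|$ is nondecreasing, identifies this circle-maximum with $\|(1-rA)^{-1}\|$ using Proposition~\ref{arg}, and then rescales $z$. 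Your route is more elementary and self-contained, but it depends essentially on the nonnegativity of the weights of this particular shift; the paper's maximum-modulus step is more robust, since it proves monotonicity of the circle-maximum of the resolvent norm for an arbitrary operator, with the weighted-shift structure entering only through the rotational symmetry of Proposition~\ref{arg}. Both proofs are valid and of comparable length.
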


\begin{proof}
Suppose that $a>b\geq 0$ satisfying
\[\max\limits_{|w|=a}\|(1-wA)^{-1}\|<\max\limits_{|w|=b}\|(1-wA)^{-1}\|.\]
By Proposition \ref{arg} and the Hahn-Banach theorem, there exists $\phi\in (\B(\H))^*$, the conjugate space of $\B(\H)$, such that $\|\phi\|=1$ and
\[|\phi((1-bA)^{-1})|=\|(1-bA)^{-1}\|=\max\limits_{|w|=b}\|(1-wA)^{-1}\|.\]
In particular,
\[|\phi((1-bA)^{-1})|>\max\limits_{|w|=a}\|(1-wA)^{-1}\|\geq \max\limits_{|w|=a}|\phi((1-wA)^{-1})|.\]
Next consider the entire function
\[F(w)\doteq \phi((1-wA)^{-1})=\sum_{n=0}^\infty \phi(A^n)w^n.\]
Then by the maximum modulus principle,
\[|\phi((1-bA)^{-1})|=|F(b)|\leq \max\limits_{|w|=a}|F(w)|=\max\limits_{|w|=a}|\phi((1-wA)^{-1})|.\]
This contradiction yields that $\max\limits_{|w|=r}\|(1-wA)^{-1}\|$ is increasing with $r$. By Proposition \ref{arg},
\[\max\limits_{|w|=r}\|(1-wA)^{-1}\|=\|(1-rA)^{-1}\|,~~\forall r\geq 0.\]
Therefore, \[\|(z-r_1A)^{-1}\|=\frac{1}{|z|}\|(1-\frac{r_1}{|z|}A)^{-1}\|\geq \frac{1}{|z|}\|(1-\frac{r_2}{|z|}A)^{-1}\|=\|(z-r_2A)^{-1}\|.\]
\end{proof}

\begin{lem}\label{quotient}
$\forall r\geq 0$,
\[\lim\limits_{z\rightarrow 0}\frac{\ln \|(z-rA)^{-1}\|}{\ln \|(z-A)^{-1}\|}=r.\]
\end{lem}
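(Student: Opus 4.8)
The plan is to bound $\ln\|(z-rA)^{-1}\|$ from above and below by quantities controlled by $\ln\|(z-A)^{-1}\|$, and then let $z\to 0$. By Proposition \ref{arg} we may assume $z=\frac{1}{t}$ with $t>0$ and let $t\to+\infty$, so the statement becomes
\[
\lim_{t\to+\infty}\frac{\ln\|(\frac{1}{t}-rA)^{-1}\|}{\ln\|(\frac{1}{t}-A)^{-1}\|}=r.
\]
The case $r=0$ is immediate since $\|(\frac{1}{t}-0\cdot A)^{-1}\|=t$, while $\|(\frac1t-A)^{-1}\|\geq\frac{\sqrt6}{\pi}(e^t-1)$ grows exponentially, so the quotient tends to $0=r$; so assume $r>0$.

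The key input is Lemma \ref{lower and upper bound of norm}, which gives for each $s>0$
\[
\frac{\sqrt{6}}{\pi s}(e^{st}-1)\leq\|(\tfrac1t-sA)^{-1}\|\leq te^{st}.
\]
Taking logarithms, $\ln\|(\frac1t-sA)^{-1}\|=st+O(\ln t)$ as $t\to+\infty$ (the lower bound gives $\ln\|(\frac1t-sA)^{-1}\|\geq st+\ln(1-e^{-st})+\ln\frac{\sqrt6}{\pi s}=st-o(1)$, and the upper bound gives $\leq st+\ln t$). Applying this with $s=r$ in the numerator and $s=1$ in the denominator,
\[
\frac{\ln\|(\frac1t-rA)^{-1}\|}{\ln\|(\frac1t-A)^{-1}\|}=\frac{rt+O(\ln t)}{t+O(\ln t)}\longrightarrow r.
\]
One should be slightly careful that the denominator is positive and $\to+\infty$ (true since $\|(\frac1t-A)^{-1}\|\to+\infty$, e.g. by the lower bound), so the quotient is eventually well-defined and the limit computation is legitimate.

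There is essentially no serious obstacle here; the only mild point requiring attention is making sure the error terms $O(\ln t)$ are genuinely negligible against the linear term $rt$ (respectively $t$) — i.e., that the two-sided bound in Lemma \ref{lower and upper bound of norm} pins down the logarithm tightly enough. This is just the observation that $\ln t = o(t)$. One could also phrase the argument without reducing to $z=\frac1t$: for general $z\to 0$ write $\frac{1}{|z|}$ in place of $t$ and invoke Proposition \ref{arg} to replace $z$ by $|z|$ at the outset; the estimates are identical. I would present it in the reduced form $t\to+\infty$ for cleanliness, handle $r=0$ separately in one line, and then close with the sandwich above.
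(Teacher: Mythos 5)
Your proof is correct and follows essentially the same route as the paper: reduce to positive real $z=\frac{1}{t}$ via Proposition \ref{arg}, treat $r=0$ separately by direct computation of the numerator, and for $r>0$ sandwich both numerator and denominator using Lemma \ref{lower and upper bound of norm}, noting that the $O(\ln t)$ error is negligible against the linear terms.
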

\begin{proof}
Since $rA$ is quasinilpotent, it follows from the spectral mapping theorem that
\[\|(z-rA)^{-1}\|\geq \frac{1}{|z|}>10\]
and hence
\[\ln \|(z-rA)^{-1}\|>\ln 10,~~\textup{if}~|z|<\frac{1}{10}.\]

Then, according to Lemma \ref{lower and upper bound of norm},
\[\frac{\ln \|(|z|-rA)^{-1}\|}{\ln [\frac{1}{|z|}e^{\frac{1}{|z|}}]}\leq\frac{\ln \|(|z|-rA)^{-1}\|}{\ln \|(|z|-A)^{-1}\|}\leq \frac{\ln \|(|z|-rA)^{-1}\|}{\ln [\frac{\sqrt{6}}{\pi}(e^{\frac{1}{|z|}}-1)]},~~\textup{if}~|z|<\frac{1}{10}.\]
\begin{itemize}
\item For $r=0$, \[\frac{\ln \frac{1}{|z|}}{\ln [\frac{1}{|z|}e^{\frac{1}{|z|}}]}\leq\frac{\ln \|(|z|-rA)^{-1}\|}{\ln \|(|z|-A)^{-1}\|}\leq \frac{\ln \frac{1}{|z|}}{\ln [\frac{\sqrt{6}}{\pi}(e^{\frac{1}{|z|}}-1)]},~~\textup{if}~|z|<\frac{1}{10}.\]
Therefore, by Proposition \ref{arg},
\[\lim\limits_{z\rightarrow 0}\frac{\ln \|(z-rA)^{-1}\|}{\ln \|(z-A)^{-1}\|}=\lim\limits_{|z|\rightarrow 0}\frac{\ln \|(|z|-rA)^{-1}\|}{\ln \|(|z|-A)^{-1}\|}=0.\]
\item For $r>0$, by Lemma \ref{lower and upper bound of norm}, for $|z|<\frac{1}{10}$,
\[\frac{\ln[\frac{\sqrt{6}}{r\pi}(e^{r\frac{1}{|z|}}-1)]}{\ln [\frac{1}{|z|}e^{\frac{1}{|z|}}]}\leq
\frac{\ln\|(|z|-rA)^{-1}\|}{\ln [\frac{1}{|z|}e^{\frac{1}{|z|}}]}\leq
\frac{\ln \|(|z|-rA)^{-1}\|}{\ln \|(|z|-A)^{-1}\|}
\leq \frac{\ln \|(|z|-rA)^{-1}\|}{\ln[\frac{\sqrt{6}}{\pi}(e^{\frac{1}{|z|}}-1)]}
\leq \frac{\ln [\frac{1}{|z|}e^{r\frac{1}{|z|}}]}{\ln[\frac{\sqrt{6}}{\pi}(e^{\frac{1}{|z|}}-1)]}.\]
Therefore, again by Proposition \ref{arg},
\[\lim\limits_{z\rightarrow 0}\frac{\ln \|(z-rA)^{-1}\|}{\ln \|(z-A)^{-1}\|}=\lim\limits_{|z|\rightarrow 0}\frac{\ln \|(|z|-rA)^{-1}\|}{\ln \|(|z|-A)^{-1}\|}=r.\]
\end{itemize}

\end{proof}

We turn to introduce the notion of {\em right closure}.

\begin{defn}
Given a nonempty subset $K$ of $\bR$, define\\
\hspace*{60pt} $\lceil K \rceil=\{\sup\sigma: \emptyset\ne\sigma\subset K, \sigma \textrm{ is bounded}\}$,\\
and call it the {\em right closure} of $K$. It is clear that the right closure of $K$ is the smallest right closed set containing $K$.
\end{defn}

\begin{lem}\label{sequence}
If $\sigma$ is a nonempty right closed subset of $\bR$, then there exists a sequence $\{r_n:n\geq 1\}$ such that whose right closure is $\sigma$.
\end{lem}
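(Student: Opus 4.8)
The statement asks: given a nonempty right closed set $\sigma \subset \bR$, produce a sequence $\{r_n\}_{n\geq 1}$ whose right closure $\lceil \{r_n\}\rceil$ equals $\sigma$. The plan is to take any countable subset $D \subset \sigma$ that is \emph{dense in $\sigma$ from below}, meaning every element of $\sigma$ is the supremum of some subset of $D$, and then enumerate $D$ as a sequence. The key point is that such a countable $D$ always exists because $\sigma$, as a subset of $\bR$, is second countable, hence separable: pick a countable dense subset $D_0$ of $\sigma$ in the usual topology, and then for each $s \in \sigma$ one can approximate $s$ from below by elements of $D_0$ together with $s$ itself — but we want to avoid adding all of $\sigma$, so instead I would argue more carefully as follows.

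First I would reduce to the case where $\sigma$ is bounded above by intersecting with $(-\infty, n]$ for each $n$ and taking a countable union of the resulting sequences; since a countable union of countable sequences is again (the range of) a sequence, and right closure commutes appropriately with this decomposition, it suffices to handle bounded $\sigma$. Actually the cleanest route: let $D$ be a countable dense subset of $\sigma$ (in the subspace topology), which exists by separability of $\bR$. I claim $\lceil D \rceil = \sigma$. The inclusion $\lceil D\rceil \subseteq \lceil \sigma \rceil = \sigma$ is immediate since $\sigma$ is right closed and $D \subseteq \sigma$. For the reverse, fix $s \in \sigma$. If $s$ is isolated from below in $\sigma$ (i.e. $s \in D$ already, or more precisely $s$ is in the closure of $D$), we need $s \in \lceil D\rceil$: since $D$ is dense in $\sigma$, there is a sequence in $D$ converging to $s$; replacing it by the subsequence of terms $\leq s$ if that subsequence is cofinal, we get $s = \sup(\text{subset of } D)$. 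The subtle case is when $s$ is the minimum of $\sigma$ or is approached only from above within $\sigma$ — then $\{d \in D : d \leq s\}$ might be empty or have supremum $< s$. To fix this I would simply \emph{add} to $D$ every point of $\sigma$ that is a local minimum of $\sigma$ (has a neighborhood in which it is the smallest element of $\sigma$); there are only countably many such points since each can be assigned a distinct rational in the gap below it. Call the enlarged countable set $D'$; then every $s\in\sigma$ is either a local minimum (so $s \in D' \subseteq \lceil D'\rceil$) or is a limit from below of points of $D'$, whence $s \in \lceil D'\rceil$. Finally enumerate $D'$ as $\{r_n\}_{n\geq 1}$ (this is possible as $D'$ is nonempty and countable, repeating terms if $D'$ is finite).

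The main obstacle I anticipate is precisely the bookkeeping around points of $\sigma$ that are \emph{not} suprema of strictly smaller elements of $\sigma$ — left endpoints of "gaps" and the global minimum. One must be sure these are captured, and the cleanest argument is the counting argument above: to each such point $s$ associate the gap $(\,\sup(\sigma \cap (-\infty,s)),\, s\,)$ (interpreting the left end as $-\infty$ if empty), which is a nonempty open interval, and distinct such points give disjoint gaps, so there are at most countably many. This guarantees $D'$ is countable. A secondary minor point is ensuring $\lceil D'\rceil \subseteq \sigma$, which needs $\sigma$ right closed (given) and $D' \subseteq \sigma$ (true by construction, since the added points are points of $\sigma$). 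With these pieces the proof is complete; none of the steps requires more than elementary order-topological reasoning, and no result beyond the definition of right closure is needed.
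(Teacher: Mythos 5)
Your proof is correct and is essentially the paper's own argument: the authors likewise take a countable dense subset $\sigma_1$ of $\sigma$ together with the (at most countable) set $\sigma_0$ of points $t\in\sigma$ with $\sigma\cap(t-\delta,t)=\emptyset$ for some $\delta>0$ --- exactly your ``local minima'' --- and enumerate $\sigma_0\cup\sigma_1$. The gap-counting argument for countability of $\sigma_0$ and the verification that the right closure recovers $\sigma$ match what the paper does (the paper leaves these as routine), so no further comparison is needed.
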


\begin{proof}
Without loss of generality, we may assume that $\sigma$ is not a finite set.
Let
\[\sigma_0=\{t\in\sigma: \exists\delta>0 \textrm{ such that } \sigma\cap(t-\delta, t))=\emptyset\}.\] Then, it is routine to see that $\sigma_0$ is at most countable. Pick a (at most) countable dense subset $\sigma_1$ of $\sigma$. List $\sigma_0\cup\sigma_1$ as $\{r_n:n\geq 1\}$. Then $\sigma$ is the right closure of $\{r_n:n\geq 1\}$.
\end{proof}

Now we are ready to give the proof of Theorem \ref{B}.

\begin{proof}[\textup{\textbf{Proof of Theorem \ref{B}}}]
By Lemma \ref{sequence}, pick a sequence $\{r_k:k\geq 1\}$ of $\sigma$ so that $r_1=1$ and that $\sigma$ is the right closure of $\{r_k:k\geq 1\}$. Set
$T=\doplus\limits_{k=1}^\infty r_kA$
which is acting on $\K=\doplus\limits_{k=1}^\infty \H$.
Since
\[\|T^n\|=\|A^n\|,~~\forall n\in \mathbb{N},\]
it follows that $T$ is quasinilpotent. Next, we will show that $\Lambda(T)=\sigma$.
\begin{itemize}
\item For $z\neq 0$,
since $r_1=1$,
\[\|(z-T)^{-1}\|=\underset{k\in \bN}{\sup}\|(z-r_kA)^{-1}\|\geq \|(z-A)^{-1}\|;\]
on the other hand, by Lemma \ref{increasing function},
\[\|(z-r_kA)^{-1}\|\leq \|(z-A)^{-1}\|,~~\forall k\in \mathbb{N}.\]
Hence,
\[\|(z-T)^{-1}\|=\|(z-A)^{-1}\|,~~\forall z\neq 0.\]

\item Given $x=\doplus\limits_{k=1}^\infty x_k\in \K$ with $\|x\|=1$.
Set $l=\sup\{r_k:x_k\ne 0\}$. On the one hand,
\begin{align*}
k_{x}(T)& =\limsup\limits_{z\rightarrow 0}\frac{\ln\|(z-T)^{-1}x\|}{\ln\|(z-T)^{-1}\|}\\
	&= \limsup\limits_{z\rightarrow 0}\frac{\frac{1}{2}\ln(\sum_{k=1}^\infty \|(z-r_kA)^{-1}x_k\|^2)}{\ln\|(z-T)^{-1}\|} \\
	&\leq  \limsup\limits_{z\rightarrow 0}\frac{\frac{1}{2}\ln(\sum_{k=1}^\infty \|(z-r_kA)^{-1}\|^2\|x_k\|^2)}{\ln\|(z-T)^{-1}\|} \\
	&\overset{\textup{Lemma~\ref{increasing function}}}{\leq} \limsup\limits_{z\rightarrow 0}\frac{\frac{1}{2}\ln( \|(z-lA)^{-1}\|^2)}{\ln\|(z-A)^{-1}\|} \\
&\overset{\textup{Lemma~\ref{quotient}}}{=}l.
\end{align*}
On the other hand,
fix a $k\in \bN$, such that $x_k\neq 0$. By Lemma \ref{rA}, $\Lambda(r_kA)=\{1\}$. Then
\begin{align*}
k_{x}(T)& =\limsup\limits_{z\rightarrow 0}\frac{\ln\|(z-T)^{-1}x\|}{\ln\|(z-T)^{-1}\|}\\
	&\geq  \limsup\limits_{z\rightarrow 0}\frac{\ln \|(z-r_kA)^{-1}x_k\|}{\ln\|(z-r_kA)^{-1}\|}\underset{z\to 0}{\lim}\frac{\ln\|(z-r_kA)^{-1}\|}{\ln\|(z-A)^{-1}\|} \\
&\overset{\textup{Lemma~\ref{quotient}}}{=}k_{x_k}(r_kA)\cdot r_k\\
&=r_k.
\end{align*}
Thus, $k_{x}(T)\geq l$. Therefore, $k_{x}(T)=l$.
Since $\sigma$ is right closed, it is clearly that

\[\Lambda(T)=\{k_{x}(T):\|x\|=1\}\subset \sigma.\]

\item Fix $k^*\in \bN$. Pick $x=\doplus\limits_{k=1}^\infty x_k\in \K$ with $x_{k}\neq 0$ $\iff$ $k=k^*$. It turns out that $k_{x}(T)=r_{k^*}$.
Hence, $\Lambda(T)\supset \{r_k:k\geq 1\}$. From Theorem \ref{A}, $\Lambda(T)$ is right closed. Thus
$\Lambda(T)\supset \sigma$, as $\sigma$ is the right closure of $\{r_k:k\geq 1\}$.

\end{itemize}

In summary, $\Lambda(T)=\sigma$.
\end{proof}

An immediate consequence of Theorem \ref{B} is that the power set of a quasinilpotent operator can be the Cantor set, and also can be $(\frac{1}{2},1]$ which is obvious non-closed. The following result is also a by-product of Theorem \ref{B}.

\begin{cor}\label{oplus no}
There exists a sequence of quasinilpotent operators $A_k\in \B(\H)$ such that $\doplus\limits_{k=1}^\infty A_k$
is a quasinilpotent operator in $\B(\K)$ and \[\Lambda(\doplus\limits_{k=1}^\infty A_k)\supsetneq \overline{\cup_{k=1}^\infty \Lambda(A_k)},\]
 where $\overline{\cup_{k=1}^\infty \Lambda(A_k)}$ is the closure of $\cup_{k=1}^\infty \Lambda(A_k)$, and $\K=\doplus\limits_{k=1}^\infty \mathcal{H}$.
\end{cor}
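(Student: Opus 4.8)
The plan is to harvest this directly from the construction in the proof of Theorem \ref{B}, choosing the data so that the union $\bigcup_k \Lambda(A_k)$ is countable and dense but misses some of its limit points. Concretely, I would take $\sigma = [1/2,1]$, pick a sequence $\{r_k : k\geq 1\}$ with $r_1 = 1$ whose right closure is $\sigma$ — by Lemma \ref{sequence} such a sequence exists, and one may arrange $\{r_k\}$ to be exactly the rationals in $[1/2,1]$ together with $1$, so that in particular $\sqrt{2}/2 \notin \{r_k\}$ — and set $A_k = r_k A$, where $A$ is the fixed weighted shift of Section 3. Put $B = \doplus_{k=1}^\infty A_k$ acting on $\K = \doplus_{k=1}^\infty \H$.

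Now I would read off the three needed facts. First, each $A_k = r_k A$ is quasinilpotent (it is a scalar multiple of the quasinilpotent $A$), and $B$ is quasinilpotent because $\|B^n\| = \sup_k \|(r_kA)^n\| = \sup_k r_k^n \|A^n\| = \|A^n\|$ for all $n$ (using $r_1=1$ and $r_k\leq 1$), so $\|B^n\|^{1/n} \to 0$. Second, by Lemma \ref{rA}, $\Lambda(A_k) = \Lambda(r_kA) = \{1\}$ for every $k$; hence $\bigcup_{k=1}^\infty \Lambda(A_k) = \{1\}$, which is already closed, so $\overline{\bigcup_k \Lambda(A_k)} = \{1\}$. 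Third, by the proof of Theorem \ref{B} applied verbatim to $B$ (which is exactly the operator called $T$ there, for this choice of $\{r_k\}$), we get $\Lambda(B) = \sigma = [1/2,1]$. Since $[1/2,1] \supsetneq \{1\}$, this gives $\Lambda(B) \supsetneq \overline{\bigcup_k \Lambda(A_k)}$, which is the assertion.

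There is essentially no obstacle here: the entire content is already contained in Theorems \ref{A} and \ref{B} and Lemma \ref{rA}, and the corollary is just the observation that the power set of a direct sum can strictly exceed the closure of the union of the summands' power sets — indeed dramatically so, since each summand contributes only $\{1\}$ while the sum realizes all of $[1/2,1]$. The only point requiring a word of care is the verification that $B$ is quasinilpotent, which is the norm computation $\|B^n\| = \|A^n\|$ above; everything else is a citation. One could alternatively phrase the example with $\sigma$ any right closed proper superset of $\{1\}$ in $[0,1]$ containing $1$ — e.g. the Cantor set union $\{1\}$, or $(1/2,1]$ — but $[1/2,1]$ is the cleanest.
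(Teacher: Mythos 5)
Your proposal is correct and follows essentially the same route as the paper: the paper takes $\{r_k\}$ to be the rationals in $(0,1]$ so that $\Lambda(\doplus_k r_kA)=(0,1]$ while each $\Lambda(r_kA)=\{1\}$ by Lemma \ref{rA}, whereas you use $[1/2,1]$ — an immaterial difference. The citation of the proof of Theorem \ref{B} and the norm computation $\|B^n\|=\|A^n\|$ match the paper's argument exactly.
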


\begin{proof}
List all the rational numbers in $(0,1]$ as a sequence $\{r_k:k\geq 1\}$. Then the right closure of $\{r_k:k\geq 1\}$
 is $(0,1]$.
Set $A_k=r_kA$, $k\in \bN$. Then $\doplus\limits_{k=1}^\infty A_k\in \B(\K)$ is  quasinilpotent.
By Lemma \ref{rA}, $\Lambda(A_k)=\{1\}$.
After revisiting the proof of Theorem \ref{B}, we obtain that $\Lambda(\doplus\limits_{k=1}^\infty A_k)=(0,1]$.
The proof is now completed.
\end{proof}

We end this section with an interesting question.

\begin{ques}
Whether or not $1$ is always in the power set of a quasinilpotent operator?
\end{ques}

\section{$\Lambda(V)=(0,1]$}

The main goal of this section is to show that
$\Lambda(V)=(0,1]$.  To obtain this, we need several auxiliary lemmas.
The following lemma is a corollary of the Phragmen-Lindel$\ddot{\rm o}$f theorem (see \cite[Page 139]{Con}).

\begin{lem}\label{lem PL}
Let $a\geq \frac{1}{2}$ and put
$$
G=\{z:|\textrm{arg }z|<\frac{\pi}{2a}\}.
$$
Suppose that $\varphi$ is analytic on $G$ and there is a constant $M$ such that $\limsup\limits_{z\rightarrow w}|\varphi(z)|\leq M$ for all $w$ in $\partial G$. If there are positive constants $P$, $R$ and $b<a$ such that
$$
|\varphi(z)|\leq P e^{(|z|^b)}
$$
for all $z$ with $|z|\geq R$, then $|\varphi(z)|\leq M$, $\forall z\in G$.
\end{lem}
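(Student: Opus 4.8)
The plan is to run the classical Phragmén–Lindelöf argument with an exponential damping factor. Fix a real number $c$ with $b<c<a$, and let $z^{c}=\exp(c\operatorname{Log}z)$ be the principal branch, which is analytic on $G$ because $G$ is a sector of half-opening $\frac{\pi}{2a}\le\pi$ (here $a\ge\frac12$) and hence is a simply connected domain missing the negative real axis. For $z\in G$ one has $\arg(z^{c})=c\arg z$ with $|c\arg z|<\frac{c\pi}{2a}<\frac{\pi}{2}$, so, setting $\delta\doteq\cos\!\bigl(\tfrac{c\pi}{2a}\bigr)>0$,
\[
\re(z^{c})=|z|^{c}\cos(c\arg z)\ge\delta\,|z|^{c}>0,\qquad z\in G.
\]

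Next, for $\epsilon>0$ put $\psi_{\epsilon}(z)=\varphi(z)\exp(-\epsilon z^{c})$, which is analytic on $G$. Since $|\exp(-\epsilon z^{c})|=\exp(-\epsilon\re z^{c})\le1$ on $G$, the boundary hypothesis gives $\limsup_{z\to w}|\psi_{\epsilon}(z)|\le M$ for every $w\in\partial G$. On the other hand, for $|z|\ge R$ we get
\[
|\psi_{\epsilon}(z)|\le P\exp\!\bigl(|z|^{b}-\epsilon\delta|z|^{c}\bigr),
\]
and because $c>b$ the exponent tends to $-\infty$ as $|z|\to\infty$ inside $G$; in particular there is $R_{\epsilon}\ge R$ with $|\psi_{\epsilon}(z)|\le M$ for all $z\in G$ with $|z|\ge R_{\epsilon}$.

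Now apply the maximum modulus principle on the bounded region $G_{\epsilon}\doteq G\cap\{|z|<R_{\epsilon}\}$: its boundary lies in $(\partial G\cap\{|z|\le R_{\epsilon}\})\cup(\overline{G}\cap\{|z|=R_{\epsilon}\})$, along the first part $|\psi_{\epsilon}|$ has boundary $\limsup$ at most $M$ and along the second part it is at most $M$, so $|\psi_{\epsilon}(z)|\le M$ throughout $G_{\epsilon}$, and therefore on all of $G$ (every point of $G$ lies in some $G_{\epsilon}$). Finally, fixing $z\in G$ and letting $\epsilon\to0^{+}$ we have $\exp(-\epsilon z^{c})\to1$, whence $|\varphi(z)|=\lim_{\epsilon\to0^{+}}|\psi_{\epsilon}(z)|\le M$, as desired. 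The only points requiring care are the choice of $c$ strictly between $b$ and $a$ — which simultaneously forces $\frac{c\pi}{2a}<\frac\pi2$ (so the damping factor is bounded by $1$ on $G$) and makes the damping override the $e^{|z|^{b}}$ growth — and the routine packaging of the last maximum-modulus step in the presence of the $\limsup$-type boundary condition; I expect that bookkeeping, though entirely standard, to be the mild obstacle.
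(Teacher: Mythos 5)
Your proof is correct: it is the classical Phragm\'en--Lindel\"of damping argument with the auxiliary factor $\exp(-\epsilon z^{c})$, $b<c<a$, which is exactly the argument behind the result the paper simply cites from Conway (p.\ 139) without proof. Nothing further is needed.
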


The following lemma is an immediately consequence of Lemma \ref{lem PL}.

\begin{lem}\label{lem quarter}
For $\alpha\in[-\pi,\pi-\frac{\pi}{4}]$,  set
$$
G_\alpha=\{z:\textrm{arg }z\in(\alpha, \alpha+\frac{\pi}{2})\}.
$$
Suppose that $\varphi$ is analytic on $G_\alpha$ and continuous on $\overline{G_\alpha}$, the closure of $G_\alpha$, and suppose that there is a positive constant $M$ such that
$\sup\limits_{z\in\partial G_\alpha}|\varphi(z)|\leq M$. If there is a positive constant $P$ such that
$$
|\varphi(z)|\leq P e^{|z|}
$$
for all $z\in G_\alpha$, then $|\varphi(z)|\leq M$, $\forall z\in G_\alpha$.
\end{lem}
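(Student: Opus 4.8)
The statement to prove is Lemma \ref{lem quarter}, which derives a Phragmén–Lindelöf statement for a quarter-plane $G_\alpha$ (an angular sector of opening $\pi/2$) from Lemma \ref{lem PL}, which handles the sector $G = \{z : |\arg z| < \pi/(2a)\}$ symmetric about the positive real axis.

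\begin{proof}[\textup{\textbf{Proof proposal}}]
The plan is to reduce Lemma \ref{lem quarter} to Lemma \ref{lem PL} by a rotation that carries $G_\alpha$ onto the symmetric sector of half-angle $\pi/4$. Concretely, set $a = 2$, so that the sector $G$ of Lemma \ref{lem PL} is $G = \{z : |\arg z| < \pi/4\}$, an angular region of opening $\pi/2$, exactly matching the opening of $G_\alpha$. The rotation $z \mapsto e^{i\beta} z$ with $\beta = \alpha + \pi/4$ maps $G$ bijectively onto $G_\alpha$, carries $\partial G$ onto $\partial G_\alpha$, and preserves moduli. Thus I would define $\psi(z) = \varphi(e^{i\beta} z)$ for $z \in \overline{G}$; then $\psi$ is analytic on $G$ and continuous on $\overline{G}$, with $\sup_{z \in \partial G}|\psi(z)| = \sup_{w \in \partial G_\alpha}|\varphi(w)| \leq M$.

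Next I would verify the growth hypothesis of Lemma \ref{lem PL} for $\psi$. Since $|\psi(z)| = |\varphi(e^{i\beta}z)| \leq P e^{|e^{i\beta}z|} = P e^{|z|}$ for all $z \in G$, in particular for all $z$ with $|z| \geq 1$, the hypothesis holds with the same $P$, with $R = 1$, and with exponent $b = 1 < 2 = a$. (Any choice $1 \leq b < 2$ works; $b=1$ is the natural one.) All hypotheses of Lemma \ref{lem PL} are then in force, so it yields $|\psi(z)| \leq M$ for all $z \in G$. Undoing the rotation, for any $w \in G_\alpha$ write $w = e^{i\beta} z$ with $z = e^{-i\beta} w \in G$; then $|\varphi(w)| = |\psi(z)| \leq M$, which is the desired conclusion.

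The only point that needs a little care is the bookkeeping on the range of $\alpha$: the restriction $\alpha \in [-\pi, \pi - \frac{\pi}{4}]$ guarantees that $\arg z \in (\alpha, \alpha + \frac{\pi}{2})$ is an unambiguous description of an open sector (so that $G_\alpha$ and its boundary are well defined as stated and the rotation is a genuine bijection onto $G$), but it plays no deeper role — Lemma \ref{lem PL} is rotation-equivariant and the constants $M, P$ are unaffected. I do not expect any real obstacle here; the lemma is essentially a change of variables, and the substantive analytic content is entirely contained in Lemma \ref{lem PL}. The one thing to state explicitly, to be safe, is that $e^{i\beta}\cdot$ maps $\partial G$ onto $\partial G_\alpha$ and $G$ onto $G_\alpha$, so that the boundary bound transfers exactly and the interior conclusion transfers back.
\end{proof}
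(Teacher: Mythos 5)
Your proposal is correct and is essentially identical to the paper's own proof: both set $a=2$ in Lemma \ref{lem PL}, rotate the symmetric sector $\{z:|\arg z|<\pi/4\}$ onto $G_\alpha$ via $z\mapsto ze^{i(\alpha+\pi/4)}$, take $b=1$, and transfer the bound back. Your write-up is in fact slightly more careful than the paper's (explicitly checking the boundary condition and the role of the restriction on $\alpha$), but there is no substantive difference.
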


 \begin{proof}
 Set $a=2$. Let
 \[G=\{z: |z|<\frac{\pi}{2a}\}=\{z: |z|<\frac{\pi}{4}\},~~\tau(z)=ze^{i(\alpha+\frac{\pi}{4})}.\]
  Then $\tau$ is an analytic homeomorphism from $G$ onto $G_\alpha$. Pick $b=1$. Let $\psi=\varphi\circ\tau$. Then $\psi$ is analytic on $G$, and $|\psi(z)|\leq Pe^{|z|}$. By Lemma \ref{lem PL},
  \[\sup\{|\psi(z)|:z\in G\}\leq M.\]
   So,
  \[\sup\{|\varphi(z)|:z\in G_\alpha\}\leq M.\]
 \end{proof}

\begin{lem}\label{we need}
Let $\varphi$ is an entire function. Suppose that there is a positive constant $P$ such that
$$
|\varphi(z)|\leq P e^{|z|},
$$
for all $z\in\bC$. If $$\max\{\limsup\limits_{r\rightarrow+\infty}|\varphi(r)|,\, \limsup\limits_{r\rightarrow+\infty}|\varphi(-r)|,\, \limsup\limits_{r\rightarrow+\infty}|\varphi(ri)|,\, \limsup\limits_{r\rightarrow+\infty}|\varphi(-ri)|\}<+\infty,$$
 then $\varphi$ is a constant.
\end{lem}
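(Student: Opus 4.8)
The plan is to apply Lemma~\ref{lem quarter} four times, once to each of the four quarter-planes into which the coordinate axes divide $\bC$, and then invoke Liouville's theorem. First I would fix $\ep>0$ and choose $R>0$ so large that $|\varphi(z)|\leq M_\ep$ on each of the four rays $[R,+\infty)$, $(-\infty,-R]$, $\{ri:r\geq R\}$, $\{-ri:r\geq R\}$, where $M_\ep$ is slightly larger than the common bound on the four $\limsup$'s; on the compact remaining pieces of the axes near the origin $\varphi$ is bounded by continuity, so after enlarging the constant we get a uniform bound, call it $M'$, on the \emph{entire} set of coordinate axes (positive and negative real and imaginary axes). The point of using the full axes rather than just the far rays is that $\partial G_\alpha$ for a quarter-plane spanned by two consecutive half-axes is exactly the union of those two half-axes.

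Next, for each $\alpha\in\{0,\tfrac{\pi}{2},\pi,-\tfrac{\pi}{2}\}$ (equivalently $\alpha=0,\tfrac{\pi}{2},-\pi,-\tfrac{\pi}{2}$, which all lie in $[-\pi,\pi-\tfrac{\pi}{4}]$), consider $G_\alpha=\{z:\arg z\in(\alpha,\alpha+\tfrac{\pi}{2})\}$. The boundary $\partial G_\alpha$ consists of two half-lines emanating from the origin, each a half-axis, so $\sup_{z\in\partial G_\alpha}|\varphi(z)|\leq M'$. Since $\varphi$ is entire it is in particular analytic on $G_\alpha$ and continuous on $\overline{G_\alpha}$, and the growth hypothesis gives $|\varphi(z)|\leq Pe^{|z|}$ there. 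Lemma~\ref{lem quarter} then yields $|\varphi(z)|\leq M'$ on $G_\alpha$. Ranging over the four choices of $\alpha$ covers all of $\bC$ (the axes being already handled), so $|\varphi|\leq M'$ on $\bC$; being a bounded entire function, $\varphi$ is constant by Liouville's theorem.

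The only mild subtlety — and the step I would be most careful about — is the bookkeeping that turns the four finite $\limsup$'s at infinity into a single finite bound on the whole union of half-axes: one must pass from ``eventually bounded by $M_\ep$'' on each far ray to ``bounded by $M'$ everywhere on the axes'' by separately bounding $\varphi$ on the compact cross near the origin, and then take $M'=\max\{M_\ep,\sup_{|z|\leq R,\,z\text{ on axes}}|\varphi(z)|\}$. There is no real analytic obstacle here; the content is entirely in Lemma~\ref{lem quarter} (hence in the Phragmén–Lindelöf Lemma~\ref{lem PL}), whose linear-exponential growth allowance $|\varphi(z)|\leq Pe^{|z|}$ is precisely matched to opening-angle $\tfrac{\pi}{2}$ sectors, so the hypotheses are available on each quarter-plane exactly as needed.
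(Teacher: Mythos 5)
Your proof is correct and follows exactly the route the paper takes: the paper's own proof is the one-line observation that Lemma~\ref{lem quarter} applied to the four quadrants gives boundedness, followed by Liouville's theorem. You have merely written out the bookkeeping (uniform bound on the axes, admissible choices of $\alpha$) that the paper leaves implicit, and that bookkeeping is sound.
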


\begin{proof}

It follows from Lemma \ref{lem quarter} that $\varphi$ is bounded. Hence it is a constant by  Liouville's theorem.

\end{proof}

For $f\in C[0,1]$, and $u\in(0,1]$, set the function
\[\Phi_{f,u}(z)=\int_0^uf(s)e^{(u-s)z}\rd{s},~~ \forall z\in\bC.\] It is standard to check that $\Phi_{f,u}(z)$ is an entire function on
$\bC$. Given $f\in C[0,1]$ and $u\in[0,1]$, set
\[P_{f,u}=\max\{|f(s)|:0\leq s\leq u\}.\]

\begin{lem}\label{up-lim-Phi}
Let $f\in C_{\mathbb{R}}[0,1]$ and $u\in(0,1]$ such that $P_{f,u}\ne 0$. Then $\limsup\limits_{\rho\rightarrow+\infty}|\Phi_{f,u}(\rho)|=+\infty$.
In particular, there is a sequence $\{\rho_k\}_{k=1}^\infty$ of positive numbers with $\lim\limits_{k\rightarrow\infty}\rho_k=+\infty$ such that $\{|\Phi_{f, u}(\rho_k)|\}_{k=1}^\infty$ is increasing and $\lim\limits_{k\rightarrow \infty}|\Phi_{f, u}(\rho_k)|=+\infty$.
\end{lem}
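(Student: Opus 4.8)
The plan is to analyze the real function $\rho \mapsto \Phi_{f,u}(\rho) = \int_0^u f(s) e^{(u-s)\rho}\,\rd s$ for large positive $\rho$ and show it cannot stay bounded. The intuition is that the kernel $e^{(u-s)\rho}$ concentrates mass near $s=0$ (where the exponent $u-s$ is largest), so $\Phi_{f,u}(\rho)$ should behave, up to the sign and size of $f$ near $0$, like $e^{u\rho}$ times something non-vanishing, hence blow up. The delicate point is that $f$ is only assumed continuous and real, so $f(0)$ could be $0$, or $f$ could oscillate near $0$; we only know $P_{f,u} = \max\{|f(s)| : 0 \le s \le u\} \ne 0$, i.e. $f$ is not identically zero on $[0,u]$.

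First I would reduce to a cleaner statement. Suppose for contradiction that $\limsup_{\rho\to+\infty}|\Phi_{f,u}(\rho)| < +\infty$. Observe that $\Phi_{f,u}$ extends to an entire function, and from the trivial bound $|\Phi_{f,u}(z)| \le \int_0^u |f(s)| e^{(u-s)|z|}\,\rd s \le u\, P_{f,u}\, e^{u|z|} \le u\,P_{f,u}\, e^{|z|}$ (using $u \le 1$), it satisfies the exponential-type hypothesis of Lemma~\ref{we need} with $P = u\,P_{f,u}$. I would then try to control $\Phi_{f,u}$ along the other three rays. Along the negative real axis, $\Phi_{f,u}(-\rho) = \int_0^u f(s) e^{-(u-s)\rho}\,\rd s \to 0$ as $\rho \to +\infty$ by dominated convergence (the integrand is bounded by $P_{f,u}$ and tends to $0$ pointwise for $s<u$), so $\limsup_{\rho\to+\infty}|\Phi_{f,u}(-\rho)| = 0$. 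Along the imaginary axis, $\Phi_{f,u}(\pm i\rho) = \int_0^u f(s) e^{\pm i(u-s)\rho}\,\rd s$, which is (a modulated version of) a Fourier-type integral of the $L^1$, indeed continuous, function $f$; by the Riemann–Lebesgue lemma this tends to $0$ as $\rho\to+\infty$. Hence if also the positive ray gave a finite $\limsup$, Lemma~\ref{we need} would force $\Phi_{f,u}$ to be constant.

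It then remains to rule out that $\Phi_{f,u}$ is constant (given $P_{f,u}\ne 0$), or more directly to show the constant would have to be $0$ while $\Phi_{f,u}\not\equiv 0$. A constant entire function equals its value at every point; taking $\rho\to+\infty$ along the three rays where the limit is $0$ shows the constant must be $0$, i.e. $\Phi_{f,u}\equiv 0$. But $\Phi_{f,u}(z) = e^{uz}\int_0^u f(s)e^{-sz}\,\rd s$, so the Laplace transform $\int_0^u f(s)e^{-sz}\,\rd s$ vanishes identically; differentiating in $z$ and evaluating at $z=0$ gives $\int_0^u f(s)s^n\,\rd s = 0$ for all $n\ge 0$, hence $f\equiv 0$ on $[0,u]$ by the Weierstrass approximation theorem (or Müntz, or just density of polynomials in $C[0,u]$), contradicting $P_{f,u}\ne 0$. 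Therefore $\limsup_{\rho\to+\infty}|\Phi_{f,u}(\rho)| = +\infty$. The ``in particular'' clause is then immediate: since $\Phi_{f,u}$ is continuous on $[0,\infty)$ and $\limsup_{\rho\to+\infty}|\Phi_{f,u}(\rho)|=+\infty$, one can recursively choose $\rho_k\to+\infty$ with $|\Phi_{f,u}(\rho_k)|$ strictly increasing to $+\infty$.

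The main obstacle is organizing the contradiction cleanly: the three auxiliary ray estimates (dominated convergence on the negative ray, Riemann–Lebesgue on the imaginary rays) are routine but must be stated, and the final step requires knowing that a finite Laplace/moment transform vanishing identically forces $f\equiv 0$ — this is where continuity of $f$ (so that polynomial approximation applies) is used. An alternative, perhaps slicker route avoiding Lemma~\ref{we need} entirely: write $\Phi_{f,u}(\rho) = e^{u\rho}\psi(\rho)$ with $\psi(\rho)=\int_0^u f(s)e^{-s\rho}\,\rd s$, and note that if $f$ is continuous and $f(s_0)\ne 0$ for some $s_0$, a Laplace-asymptotics / Watson's-lemma type argument shows $\psi(\rho)$ decays only polynomially (not exponentially) when $f$ does not vanish to infinite order at $0$; but handling the case $f(s)=0$ near $0$ forces one back to extracting the first interval where $f\ne 0$, so I would prefer the entire-function argument above, which sidesteps these case distinctions by using the already-proven Lemma~\ref{we need}.
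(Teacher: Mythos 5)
Your argument is correct and follows essentially the same route as the paper: bound $\Phi_{f,u}$ by $Pe^{|z|}$, check the negative real and imaginary rays, invoke Lemma \ref{we need} to force $\Phi_{f,u}$ to be the constant $0$, then kill all moments of $f$ and apply Weierstrass approximation to contradict $P_{f,u}\ne 0$. The only cosmetic difference is that you use Riemann--Lebesgue on the imaginary axis where the paper simply notes the trivial bound $|\Phi_{f,u}(\rho i)|\le P_{f,u}$, which already suffices for Lemma \ref{we need}.
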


\begin{proof}
 It is not difficult to see that the following statements hold.
\begin{itemize}
\item[(1) ] $|\Phi_{f,u}(z)|\leq P_{f,u} e^{|z|}$, $\forall z\in \bC$.
\item[(2) ] $|\Phi_{f, u}(\rho i)|\leq P_{f,u}$, $\forall \rho\in\bR$.
\item[(3) ] $\lim\limits_{\rho\rightarrow+\infty}\Phi_{f,u}(-\rho)=0$.
\end{itemize}
Assume that $\limsup\limits_{\rho\rightarrow+\infty}|\Phi_{f,u}(\rho)|<+\infty$. It follows from the above-mentioned statement (3) and Lemma \ref{we need} that $\Phi_{f,u}(z)=0$, $\forall z\in\bC$.
So,
$$
\psi(z)\doteq\int_0^uf(s)e^{-zs}\rd s=\Phi_{f,u}(z) e^{-uz}=0.
$$
And then the $n$-th derivative of $\psi(z)$ equals to 0, $\forall z\in\bC$.
Notice that
\[\psi^{(n)}(z)=\int_0^u (-1)^ns^nf(s) e^{-zs}\rd s.\]
 Choose $z=0$. Then
$$
\int_0^us^nf(s)\rd s=0,~\forall n\geq 0.$$ Thus,
\[\int_0^u g(s)f(s)\rd s=0\]
for all polynomials. Then by the Weierstrass approximation theorem,  $f(s)=0$, $\forall s\in[0,u]$, and hence $P_{f,u}=0$. This contradiction shows that $\limsup\limits_{\rho\rightarrow+\infty}|\Phi_{f,u}(\rho)|=+\infty$.
\end{proof}

\begin{lem}\label{Phi-e-k}
Given $f\in C_{\mathbb{R}}[0,1]$, $u\in(0,1]$ and a positive constant $\rho$. If $f(u)\ne 0$, then there is a positive constant $d$ and a sequence $\{\rho_k\}_{k=1}^\infty$ with $\lim\limits_{k\rightarrow\infty}\rho_k=+\infty$ such that $$|\Phi_{f, u}(\rho_k)|>\rho e^{d\rho_k},$$
 $\forall k\geq 1$.
\end{lem}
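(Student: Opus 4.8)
\textbf{Proof proposal for Lemma \ref{Phi-e-k}.}

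The plan is to reduce the statement to Lemma \ref{up-lim-Phi} after splitting off the contribution near the endpoint $u$, where $f$ is assumed nonzero. Write $\Phi_{f,u}(\rho)=\int_0^u f(s)e^{(u-s)\rho}\rd s$. Since $f$ is continuous and $f(u)\ne 0$, fix $\delta\in(0,u)$ so small that $f$ does not vanish on $[u-\delta,u]$ and has constant sign there, with $|f(s)|\geq c>0$ on that interval for some constant $c$. The idea is that the ``tail'' integral $\int_{u-\delta}^u f(s)e^{(u-s)\rho}\rd s$ grows essentially like $\frac{c}{\rho}$ (up to a harmless sign and the $e^{\delta\rho}$ factor is \emph{not} what dominates --- in fact the mass concentrates near $s=u$, where $e^{(u-s)\rho}$ is of order $1$), while the ``head'' integral over $[0,u-\delta]$ is, by Lemma \ref{up-lim-Phi} applied to $f$ restricted to $[0,u-\delta]$, of size $|\Phi_{f,u-\delta}(\rho)|e^{\delta\rho}$, which along a suitable subsequence is comparable to or larger than the head. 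More carefully, I would instead apply Lemma \ref{up-lim-Phi} directly to the pair $(f,u)$ to produce a sequence $\rho_k\to+\infty$ with $|\Phi_{f,u}(\rho_k)|$ increasing to $+\infty$, and then argue that this forces exponential growth with some positive rate $d$.

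Here is the cleaner route I would actually carry out. First note the trivial upper bound $|\Phi_{f,u}(\rho)|\leq P_{f,u}\,e^{u\rho}\leq P_{f,u}\,e^\rho$ for $\rho\geq 0$. Now suppose, for contradiction, that for \emph{every} $d>0$ the inequality $|\Phi_{f,u}(\rho)|>\rho e^{d\rho}$ fails for all large $\rho$; taking $d=d_m\downarrow 0$ one gets, for each $m$, that $|\Phi_{f,u}(\rho)|\leq \rho e^{d_m\rho}$ for $\rho\geq R_m$. This says $\limsup_{\rho\to\infty}\frac{\ln|\Phi_{f,u}(\rho)|}{\rho}\leq 0$, i.e. $\Phi_{f,u}$ has sub-exponential growth along the positive real axis. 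Combined with statement (2) of Lemma \ref{up-lim-Phi} (boundedness on the imaginary axis), statement (3) (decay on the negative real axis), the bound $|\Phi_{f,u}(z)|\leq P_{f,u}e^{|z|}$, and Lemma \ref{we need}, one wants to conclude $\Phi_{f,u}$ is constant --- but Lemma \ref{we need} needs a genuine $\limsup$ bound on $|\Phi_{f,u}(r)|$, not merely on $\frac{\ln|\Phi_{f,u}(r)|}{r}$. So this exact contradiction does not immediately close; instead I would use the subsequence from Lemma \ref{up-lim-Phi} together with the following quantitative point.

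The key quantitative step, and the main obstacle, is to show that the concentration of the integral near $s=u$ produces a genuine lower bound of the form $|\Phi_{f,u}(\rho)|\geq \frac{c}{2\rho}-(\text{error})$ directly, rather than via a compactness/contradiction argument. Concretely: since $f(u)\ne 0$, pick $\delta$ with $|f(s)-f(u)|<\tfrac12|f(u)|$ on $[u-\delta,u]$, so $f$ has constant sign $\epsilon=\operatorname{sgn} f(u)$ there and $|f(s)|\geq\tfrac12|f(u)|$. Then $\epsilon\int_{u-\delta}^u f(s)e^{(u-s)\rho}\rd s\geq \tfrac12|f(u)|\int_{u-\delta}^u e^{(u-s)\rho}\rd s=\tfrac{|f(u)|}{2\rho}(e^{\delta\rho}-1)$, which is $\geq \tfrac{|f(u)|}{4\rho}e^{\delta\rho}$ for $\rho$ large. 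Meanwhile $\bigl|\int_0^{u-\delta}f(s)e^{(u-s)\rho}\rd s\bigr|\leq P_{f,u}\,u\,e^{(u-\delta')\rho}$ for any $\delta'<\delta$... but this is $e^{(u-\delta')\rho}$ against $e^{\delta\rho}$, and since $u-\delta'$ can exceed $\delta$, the head need not be negligible. The fix is to apply Lemma \ref{up-lim-Phi} to the restriction $f|_{[0,u-\delta]}$ to get a subsequence where the head has size $\approx |\Phi_{f,u-\delta}(\rho_k)|e^{\delta\rho}$ growing, and then \emph{choose $\delta$ iteratively / optimize} so that along a common subsequence the total does not cancel; alternatively, and more robustly, run the contradiction argument of Lemma \ref{up-lim-Phi} verbatim: if no such $d,\{\rho_k\}$ existed then $|\Phi_{f,u}(\rho)|/e^{\eta\rho}\to 0$ along some sequence for every $\eta>0$, force $\limsup_{\rho}|\Phi_{f,u}(\rho)|<\infty$ after dividing by $e^{u\rho}$ appropriately, invoke Lemma \ref{we need} on $\Phi_{f,u}$ to get it constant, then $\psi(z)=\Phi_{f,u}(z)e^{-uz}$ is constant hence (being $0$ at $+\infty$ by statement (3)) identically $0$, whence $f\equiv 0$ on $[0,u]$ by the Weierstrass argument, contradicting $f(u)\ne0$. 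I expect the delicate part to be making the passage ``sub-exponential growth of $\Phi_{f,u}$ along $\mathbb{R}^+$ $\Rightarrow$ hypothesis of Lemma \ref{we need}'' fully rigorous --- one must divide by the right exponential and check the $\limsup$ is finite on all four rays simultaneously --- and I would handle it by noting that failure of the conclusion for all $d>0$ gives $|\Phi_{f,u}(\rho)|\leq e^{o(\rho)}$, so $\psi(\rho)=\Phi_{f,u}(\rho)e^{-u\rho}\to0$, $\psi(-\rho)\to0$ by (3) rewritten, $\psi(\pm i\rho)$ bounded by (2), and $|\psi(z)|\le P_{f,u}e^{|z|}$; then Lemma \ref{we need} applied to $\psi$ gives $\psi$ constant, hence $0$, finishing as above.
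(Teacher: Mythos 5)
There is a genuine gap, and it sits exactly where you park the weight of the argument. Your fallback plan is to apply Lemma \ref{we need} to $\psi(z)=\Phi_{f,u}(z)e^{-uz}=\int_0^uf(s)e^{-zs}\,\rd s$ and conclude $\psi\equiv 0$, hence $f\equiv 0$ on $[0,u]$. But the claim ``$\psi(-\rho)\to 0$ by (3) rewritten'' is false: statement (3) says $\Phi_{f,u}(-\rho)\to 0$, whereas $\psi(-\rho)=\Phi_{f,u}(-\rho)e^{u\rho}=\int_0^uf(s)e^{s\rho}\,\rd s$, and under the standing hypothesis $f(u)\ne 0$ this behaves like $\tfrac{f(u)}{\rho}e^{u\rho}\to\infty$. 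So the hypothesis of Lemma \ref{we need} fails for $\psi$ on the negative real axis. A sanity check confirms something must be wrong: none of the four ray bounds you list for $\psi$ actually uses the contradiction hypothesis in an essential way ($\psi(\rho)\to 0$ holds for every continuous $f$, the imaginary-axis bound and the growth bound are automatic), so if the argument closed it would prove $f\equiv 0$ unconditionally. Your first, direct approach (main term from $[u-\delta,u]$, error from $[0,u-\delta]$) fails for the reason you yourself identify, and the proposed repair (``choose $\delta$ iteratively / optimize along a common subsequence'') is not an argument.

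The missing idea is to run the decomposition the other way around. Pick $d\in(0,u)$ with $f(u-d)\ne 0$ and set $v=u-d$, so $P_{f,v}\ne 0$ and Lemma \ref{up-lim-Phi} applies to the pair $(f,v)$: there is a sequence $\rho_k\to+\infty$ with $|\Phi_{f,v}(\rho_k)|>\rho+\|f\|_\infty$. Now split $\Phi_{f,u}(\rho_k)=\int_0^v+\int_v^u$. The head factors exactly as $\int_0^vf(s)e^{(u-s)\rho_k}\,\rd s=e^{d\rho_k}\Phi_{f,v}(\rho_k)$, while the tail satisfies $|\int_v^uf(s)e^{(u-s)\rho_k}\,\rd s|\le\|f\|_\infty e^{d\rho_k}$. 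Both pieces carry the \emph{same} exponential factor $e^{d\rho_k}$, so there is no competition between exponential rates and no cancellation to worry about; the triangle inequality gives $|\Phi_{f,u}(\rho_k)|>(\rho+\|f\|_\infty-\|f\|_\infty)e^{d\rho_k}=\rho e^{d\rho_k}$. In other words, the piece made large by Lemma \ref{up-lim-Phi} is the one supported away from $u$, and the piece near $u$ is the harmless error term with an explicit constant prefactor $\|f\|_\infty$ --- the opposite of the roles you tried to assign.
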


\begin{proof}
Because $f$ is continuous and that $f(u)\ne 0$, we can pick $d\in(0, u)$ so that $f(u-d)\ne 0$. Write $v=u-d$. Then $P_{f,v}\ne 0$.  Pick Set $C=\rho+\|f\|_\infty$, where $\|f\|_\infty\doteq \underset{t\in [0,1]}{\max}|f(t)|$. By Lemma \ref{up-lim-Phi}, there is a sequence $\{\rho_k\}_{k=1}^\infty$ of positive numbers with $\lim\limits_{k\rightarrow\infty}\rho_k=+\infty$ such that $|\Phi_{f,v}(\rho_k)|>C$, $\forall k\geq 1$. Notice that
\begin{align*}
|\Phi_{f,u}(\rho_k)|&=|\int_0^uf(s)e^{(u-s)\rho_k}\rd s|\\
&=|\int_0^vf(s)e^{(u-s)\rho_k}\rd s+\int_v^uf(s)e^{(u-s)\rho_k}\rd s|\\
&\geq |\int_0^vf(s)e^{(u-s)\rho_k}\rd s|-|\int_v^uf(s)e^{(u-s)\rho_k}\rd s|.
\end{align*}

Notice that
$$
|\int_0^vf(s)e^{(u-s)\rho_k}\rd s|=|\int_0^vf(s)e^{(v-s)\rho_k}\rd se^{(u-v)\rho_k}|=|\Phi_{f,v}(\rho_k)|e^{d\rho_k}> Ce^{d\rho_k};
$$
and
$$
|\int_v^uf(s)e^{(u-s)\rho_k}\rd s|\leq\|f\|_\infty e^{(u-v)\rho_k}=\|f\|_\infty e^{d\rho_k}.
$$
So,
\[|\Phi_{f, u}(\rho_k)|>(C-\|f\|_\infty)e^{d\rho_k}=\rho e^{d\rho_k}.\]

\end{proof}

The following lemma is the complex version of Lemma \ref{Phi-e-k} and therefore the proof is omitted.
\begin{lem}\label{c-Phi-e-k}
Given $f\in C[0,1]$, $u\in(0,1]$ and a positive constant $\rho$. If $f(u)\ne 0$, then there is a positive constant $d$ and a sequence $\{\rho_k\}_{k=1}^\infty$ with $\lim\limits_{k\rightarrow\infty}\rho_k=+\infty$ such that $$|\Phi_{f, u}(\rho_k)|>\rho e^{d\rho_k},~\forall k\geq 1.$$
\end{lem}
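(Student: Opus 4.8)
\emph{The plan} is to reduce the complex-valued case to the already-established real version, Lemma~\ref{Phi-e-k}, by passing to real and imaginary parts. First I would write $f = f_1 + i f_2$ with $f_1, f_2 \in C_{\mathbb{R}}[0,1]$; by linearity of the integral defining $\Phi_{f,u}$, one has $\Phi_{f,u} = \Phi_{f_1,u} + i\,\Phi_{f_2,u}$ as entire functions. The elementary point I would use is that for a \emph{real} argument $x$, each value $\Phi_{f_j,u}(x) = \int_0^u f_j(s) e^{(u-s)x}\rd s$ is a real number, so
\[
|\Phi_{f,u}(x)|^2 = \Phi_{f_1,u}(x)^2 + \Phi_{f_2,u}(x)^2 \;\geq\; \Phi_{f_j,u}(x)^2,\qquad j=1,2,
\]
and hence $|\Phi_{f,u}(x)| \geq |\Phi_{f_j,u}(x)|$ for every $x\in\bR$.

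Next, since $f(u)\ne 0$, at least one of $f_1(u), f_2(u)$ is nonzero; fix such an index $j$. Because $f_j\in C_{\mathbb{R}}[0,1]$, $u\in(0,1]$, and the prescribed positive constant $\rho$ is left unchanged, I would invoke Lemma~\ref{Phi-e-k} applied to $f_j$ to obtain a constant $d>0$ and a sequence $\{\rho_k\}_{k=1}^\infty$ of positive numbers with $\lim_{k\to\infty}\rho_k=+\infty$ such that $|\Phi_{f_j,u}(\rho_k)| > \rho e^{d\rho_k}$ for every $k$. Evaluating the displayed estimate at $x=\rho_k$ then gives $|\Phi_{f,u}(\rho_k)| \geq |\Phi_{f_j,u}(\rho_k)| > \rho e^{d\rho_k}$, which is exactly the assertion.

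I do not expect a real obstacle here; the only subtlety is that the domination $|\Phi_{f,u}|\geq|\Phi_{f_j,u}|$ holds only along the real axis — it can fail for general complex arguments — but this is harmless since the conclusion concerns only the real sequence $\{\rho_k\}$. Alternatively, one could rerun the proofs of Lemma~\ref{up-lim-Phi} and Lemma~\ref{Phi-e-k} directly for complex-valued $f$: the complex-analytic inputs (Lemmas~\ref{lem PL}--\ref{we need}) and the Weierstrass approximation step are both indifferent to whether $f$ is real- or complex-valued; but the split into real and imaginary parts is shorter and avoids duplicating that argument.
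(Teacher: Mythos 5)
Your proposal is correct. The paper gives no argument at all for Lemma~\ref{c-Phi-e-k} --- it simply declares it ``the complex version of Lemma~\ref{Phi-e-k}'' and omits the proof, implicitly inviting the reader to rerun the chain Lemma~\ref{we need} $\to$ Lemma~\ref{up-lim-Phi} $\to$ Lemma~\ref{Phi-e-k} with complex-valued $f$ (which does go through: the Phragm\'en--Lindel\"of input and the moment/Weierstrass step are insensitive to whether $f$ is real or complex). Your reduction via $f=f_1+if_2$ is a genuinely different and cleaner route: the key observation that $\Phi_{f_j,u}(x)$ is real for real $x$, so that $|\Phi_{f,u}(x)|\geq|\Phi_{f_j,u}(x)|$ along the real axis, lets you quote Lemma~\ref{Phi-e-k} verbatim for whichever of $f_1(u),f_2(u)$ is nonzero, with the same $\rho$, and conclude immediately. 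You are also right to flag that this domination is special to real arguments; since the sequence $\{\rho_k\}$ produced by Lemma~\ref{Phi-e-k} consists of positive reals, this is harmless. The trade-off is minor: the rerun approach keeps a single self-contained argument, while your reduction avoids duplicating two lemmas at the cost of one line of bookkeeping.
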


\begin{lem}\label{commute}
Suppose that $T\in \B(\H)$ is quasinilpotent, and suppose that $A\in \B(\H)$ commutes with $T$. Then $k_{Ax}(T)\leq k_x(T)$ if $Ax\ne 0$, where $x\in \H$.
\end{lem}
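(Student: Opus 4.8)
The plan is to use the single observation that, since $A$ commutes with $T$, it commutes with every resolvent of $T$. Concretely, from $AT=TA$ one gets $A(\lambda-T)=(\lambda-T)A$, and multiplying on both sides by $(\lambda-T)^{-1}$ gives $A(\lambda-T)^{-1}=(\lambda-T)^{-1}A$ for every $\lambda\in\rho(T)$; since $T$ is quasinilpotent, this holds for all $\lambda\neq 0$. Hence, for any $x\in\H$ and any $\lambda\neq 0$,
\[
\|(\lambda-T)^{-1}(Ax)\|=\|A(\lambda-T)^{-1}x\|\leq\|A\|\,\|(\lambda-T)^{-1}x\|.
\]

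Next I would take logarithms and divide by $\ln\|(\lambda-T)^{-1}\|$. By Remark \ref{basic remark} we have $\|(\lambda-T)^{-1}\|\to+\infty$ as $\lambda\to 0$, so for $|\lambda|$ small enough $\ln\|(\lambda-T)^{-1}\|>0$ and dividing preserves the inequality. Assuming $Ax\neq 0$ (so that the left-hand side below is defined), this yields, for all sufficiently small $\lambda\neq 0$,
\[
\frac{\ln\|(\lambda-T)^{-1}(Ax)\|}{\ln\|(\lambda-T)^{-1}\|}\ \leq\ \frac{\ln\|A\|}{\ln\|(\lambda-T)^{-1}\|}+\frac{\ln\|(\lambda-T)^{-1}x\|}{\ln\|(\lambda-T)^{-1}\|}.
\]
Finally I would pass to the $\limsup$ as $\lambda\to 0$: the first term on the right tends to $0$ because its numerator is fixed while its denominator diverges to $+\infty$, so the $\limsup$ of the right-hand side equals $k_x(T)$, while the $\limsup$ of the left-hand side is by definition $k_{Ax}(T)$. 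Therefore $k_{Ax}(T)\leq k_x(T)$, as desired.

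There is no serious obstacle here; the proof is essentially the commutation identity together with the elementary fact that adding a fixed constant to the numerator does not affect the $\limsup$ of the ratio once the denominator blows up. The only points that deserve a line of care are that $\ln\|(\lambda-T)^{-1}\|$ is eventually positive (so the division is legitimate) and that $\|A\|$ may be less than $1$, making $\ln\|A\|$ negative — which is harmless, or can be sidestepped by replacing $\|A\|$ with $\max\{\|A\|,1\}$ throughout.
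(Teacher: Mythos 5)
Your proposal is correct and follows essentially the same route as the paper: commute $A$ past the resolvent, bound $\|A(\lambda-T)^{-1}x\|$ by $\|A\|\,\|(\lambda-T)^{-1}x\|$, and observe that the $\ln\|A\|$ contribution is killed by the divergence of $\ln\|(\lambda-T)^{-1}\|$. Your added care about the eventual positivity of the denominator and the sign of $\ln\|A\|$ is a minor tidiness improvement over the paper's one-line computation, but the argument is identical in substance.
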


\begin{proof}
Since $TA=AT$, it follows that
\[(\lambda-T)^{-1}A=A(\lambda-T)^{-1}\]
 and hence from the fact that $\underset{\lambda \to 0}{\lim}\|(\lambda-T)^{-1}\|=+\infty$,
\[k_{Ax}(T)=\limsup\limits_{\lambda\rightarrow 0}\frac{\ln\|A(\lambda-T)^{-1}x\|}{\ln\|(\lambda-T)^{-1}\|}\leq \limsup\limits_{\lambda\rightarrow 0}\frac{\ln(\|A\|+\ln\|(\lambda-T)^{-1}x\|)}{\ln\|(\lambda-T)^{-1}\|}\leq  k_{x}(T).\]

\end{proof}

As $V$ is injective, for $0\neq f\in L^2[0,1]$, Lemma \ref{commute} implies that $\{k_{V^nf}\}_{n=0}^{\infty}$ is a decreasing sequence. In particular, $k_{Vf}$ indeed plays an important role in the remaining part of this section.
Now we recall the following nice formulation of $(\lambda-V)^{-1}Vf$.

\begin{lem}\label{metric at Vf}
For $f\in L^2[0,1]$, $\lambda\neq 0$,
\[(\lambda-V)^{-1}Vf(t)=\frac{1}{\lambda}\int_0^te^{\frac{t-s}{\lambda}}f(s)\rd s,~~t\in [0,1].\]
\end{lem}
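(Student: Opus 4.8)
The statement to prove is Lemma~\ref{metric at Vf}: for $f\in L^2[0,1]$ and $\lambda\neq 0$,
\[(\lambda-V)^{-1}Vf(t)=\frac{1}{\lambda}\int_0^te^{\frac{t-s}{\lambda}}f(s)\,\rd s.\]

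\textbf{Approach.} Since $V$ is quasinilpotent, $(\lambda-V)^{-1}=\sum_{n=0}^\infty \lambda^{-(n+1)}V^n$ with the series converging in operator norm for every $\lambda\neq 0$ (indeed $\|V^n\|\le 1/n!$, so the series is entire in $1/\lambda$). Hence $(\lambda-V)^{-1}Vf=\sum_{n=0}^\infty \lambda^{-(n+1)}V^{n+1}f=\sum_{m=1}^\infty \lambda^{-m}V^m f$, with convergence in $L^2[0,1]$. The plan is to identify this sum with the claimed integral operator by computing $V^m f$ explicitly and summing. First I would record the standard formula for the iterated Volterra kernel, $V^m f(t)=\frac{1}{(m-1)!}\int_0^t (t-s)^{m-1}f(s)\,\rd s$ for $m\ge 1$, which follows by an easy induction on $m$ using Fubini to interchange the order of integration. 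Then, substituting, the partial sums become $\sum_{m=1}^N \lambda^{-m}\frac{1}{(m-1)!}\int_0^t(t-s)^{m-1}f(s)\,\rd s=\frac{1}{\lambda}\int_0^t\Big(\sum_{m=1}^N \frac{((t-s)/\lambda)^{m-1}}{(m-1)!}\Big)f(s)\,\rd s$.

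\textbf{Passing to the limit.} The inner finite sum converges pointwise (in $s$, uniformly on $[0,t]$) to $e^{(t-s)/\lambda}$ as $N\to\infty$, and is dominated by $e^{|t-s|/|\lambda|}\le e^{1/|\lambda|}$, a constant; since $f\in L^2[0,1]\subset L^1[0,1]$, dominated convergence gives $\frac{1}{\lambda}\int_0^t\big(\sum_{m=1}^N\cdots\big)f(s)\,\rd s\to \frac{1}{\lambda}\int_0^t e^{(t-s)/\lambda}f(s)\,\rd s$ for each fixed $t$. On the other hand, the partial sums $\sum_{m=1}^N\lambda^{-m}V^mf$ converge in $L^2[0,1]$ to $(\lambda-V)^{-1}Vf$, hence a subsequence converges pointwise a.e.; matching the two limits identifies $(\lambda-V)^{-1}Vf(t)$ with the stated integral for a.e.\ $t$, which is the assertion. (Alternatively, one checks directly that the function $g(t)=\frac{1}{\lambda}\int_0^t e^{(t-s)/\lambda}f(s)\,\rd s$ satisfies $(\lambda-V)g=Vf$: differentiating, $g'(t)=\frac{1}{\lambda^2}\int_0^t e^{(t-s)/\lambda}f(s)\,\rd s+\frac{1}{\lambda}f(t)=\frac{1}{\lambda}g(t)+\frac{1}{\lambda}f(t)$, so $Vg'=g$ gives... more cleanly, $g-V g=\lambda^{-1}\int_0^t e^{(t-s)/\lambda}f(s)\,\rd s$ minus its antiderivative equals $Vf$ after an integration by parts; this avoids the series entirely.)

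\textbf{Main obstacle.} There is no real obstacle here — this is a routine computation. The only point requiring a modicum of care is the justification of interchanging summation and integration, which is handled by the uniform bound $\sum_{m\ge 1}\frac{|t-s|^{m-1}}{|\lambda|^m(m-1)!}=\frac{1}{|\lambda|}e^{|t-s|/|\lambda|}\le \frac{1}{|\lambda|}e^{1/|\lambda|}$ together with $f\in L^1$; and the passage between $L^2$-convergence and pointwise a.e.\ convergence, which is standard. I would therefore present the argument via the Neumann series and the iterated-kernel formula, noting that one could equally verify the formula by direct substitution into $(\lambda-V)g=Vf$, and omit the elementary estimates.
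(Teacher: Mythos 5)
Your proposal is correct and follows essentially the same route as the paper: expand $(\lambda-V)^{-1}Vf$ as the Neumann series $\sum_{n\ge 0}\lambda^{-(n+1)}V^{n+1}f$, insert the iterated-kernel formula $V^{m}f(t)=\frac{1}{(m-1)!}\int_0^t(t-s)^{m-1}f(s)\,\rd s$, and sum under the integral sign to obtain the exponential kernel. Your added justification of the interchange (uniform domination of the kernel sum plus matching the $L^2$ limit with the a.e.\ pointwise limit along a subsequence) is more careful than the paper, which performs the interchange without comment.
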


\begin{proof}
It is well known that
\[V^nf(t)=\int_0^t \frac{(t-s)^{n-1}}{(n-1)!}f(s)\rd s,~~n\geq 1.\]
Hence for $\lambda\neq 0$,
\[(\lambda-V)^{-1}Vf(t)=\sum_{n=0}^\infty \frac{V^{n+1}f}{\lambda^{n+1}}
=\frac{1}{\lambda}\sum_{n=0}^\infty \int_0^t \frac{(t-s)^{n}}{\lambda^n\cdot n!}f(s)\rd s=
\frac{1}{\lambda}\int_0^te^{\frac{t-s}{\lambda}}f(s)\rd s,\]
where $t\in[0,1], f\in L^2[0,1].$
\end{proof}

About the lower bound of $\|(\lambda-V)^{-1}Vf\|$, we obtain the following estimate.

\begin{lem}\label{Vf} For $u\in(0,1]$,
\[\|(\lambda-V)^{-1}Vf\|\geq\frac{1}{\sqrt{u}}(|\int_0^u f(s)e^{\frac{u-s}{\lambda}}\rd s|-\|f\|).\]
\end{lem}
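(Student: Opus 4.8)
The statement to establish is the lower bound
\[
\|(\lambda-V)^{-1}Vf\|\ \geq\ \frac{1}{\sqrt{u}}\Bigl(\Bigl|\int_0^u f(s)e^{\frac{u-s}{\lambda}}\rd s\Bigr|-\|f\|\Bigr),\qquad u\in(0,1].
\]
The natural route is to use Lemma \ref{metric at Vf}, which gives the explicit formula $(\lambda-V)^{-1}Vf(t)=\frac{1}{\lambda}\int_0^t e^{\frac{t-s}{\lambda}}f(s)\rd s$. First I would bound the $L^2$-norm below by its restriction to the interval $[0,u]$ and then by a Cauchy--Schwarz (in the $t$-variable) passage to the ``average'' value of the function on $[0,u]$:
\[
\|(\lambda-V)^{-1}Vf\|^2\ \geq\ \int_0^u \bigl|(\lambda-V)^{-1}Vf(t)\bigr|^2\rd t\ \geq\ \frac{1}{u}\Bigl|\int_0^u (\lambda-V)^{-1}Vf(t)\,\rd t\Bigr|^2,
\]
so that $\|(\lambda-V)^{-1}Vf\|\geq \frac{1}{\sqrt{u}}\bigl|\int_0^u (\lambda-V)^{-1}Vf(t)\,\rd t\bigr|$.

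The remaining work is to analyze $I\doteq\int_0^u (\lambda-V)^{-1}Vf(t)\,\rd t=\frac{1}{\lambda}\int_0^u\!\!\int_0^t e^{\frac{t-s}{\lambda}}f(s)\rd s\,\rd t$. I would switch the order of integration over the triangle $\{0\le s\le t\le u\}$, obtaining $I=\frac{1}{\lambda}\int_0^u f(s)\bigl(\int_s^u e^{\frac{t-s}{\lambda}}\rd t\bigr)\rd s$. The inner integral evaluates to $\lambda\bigl(e^{\frac{u-s}{\lambda}}-1\bigr)$, so that
\[
I=\int_0^u f(s)\bigl(e^{\frac{u-s}{\lambda}}-1\bigr)\rd s=\int_0^u f(s)e^{\frac{u-s}{\lambda}}\rd s-\int_0^u f(s)\rd s.
\]
By the triangle inequality $|I|\geq \bigl|\int_0^u f(s)e^{\frac{u-s}{\lambda}}\rd s\bigr|-\bigl|\int_0^u f(s)\rd s\bigr|$, and the last term is at most $\int_0^u |f(s)|\rd s\leq \sqrt{u}\,\|f\|_{L^2[0,u]}\leq\sqrt{u}\,\|f\|$ by Cauchy--Schwarz. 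Combining, $\|(\lambda-V)^{-1}Vf\|\geq\frac{1}{\sqrt{u}}(|I|+\sqrt u\|f\|-\sqrt u\|f\|)$... more carefully, $\|(\lambda-V)^{-1}Vf\|\geq \frac{1}{\sqrt u}|I|\geq \frac{1}{\sqrt u}\bigl(\bigl|\int_0^u f(s)e^{\frac{u-s}{\lambda}}\rd s\bigr|-\sqrt u\|f\|\bigr)$, which simplifies to the claimed inequality since $\frac{1}{\sqrt u}\cdot\sqrt u\|f\|=\|f\|$.

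\textbf{Main obstacle.} There is essentially no deep obstacle here: the only points requiring a little care are justifying the Fubini interchange (legitimate since the double integral is absolutely convergent for fixed $\lambda\neq 0$ over the bounded triangle, with $f\in L^2\subset L^1$ there) and being careful that the estimate is stated with the correct constant $1/\sqrt{u}$ rather than $1/u$ — the Cauchy--Schwarz step that trades the $t$-integral over $[0,u]$ for $\frac{1}{u}|\cdots|^2$ and the one that bounds $\int_0^u|f|$ by $\sqrt{u}\,\|f\|$ must be applied in the right places. The inequality is stated for complex $\lambda$, so I would keep all absolute values and note that the modulus bound $\bigl|\int_0^u f(s)\rd s\bigr|\le \int_0^u|f(s)|\rd s$ holds regardless of whether $f$ is real- or complex-valued.
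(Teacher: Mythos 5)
Your proof is correct and follows essentially the same route as the paper's: restrict the $L^2$-norm to $[0,u]$, apply Cauchy--Schwarz, interchange the order of integration over the triangle, and evaluate the inner integral to obtain $\int_0^u f(s)\bigl(e^{\frac{u-s}{\lambda}}-1\bigr)\rd s$. The only cosmetic difference is at the last step: your estimate $\bigl|\int_0^u f(s)\rd s\bigr|\le\sqrt{u}\,\|f\|$ gives $\frac{1}{\sqrt{u}}\bigl|\int_0^u f(s)e^{\frac{u-s}{\lambda}}\rd s\bigr|-\|f\|$, which is at least the stated right-hand side because $u\le 1$ (the paper instead bounds $|Vf(u)|\le\|f\|$ directly), so the lemma follows.
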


\begin{proof} By the Cauchy-Schwarz inequality,
\begin{align*}\|(\lambda-V)^{-1}Vf\|^2&=\int_0^1|(\lambda-V)^{-1}Vf(t)|^2\rd t\\
&\geq\int_0^u|(\lambda-V)^{-1}Vf(t)|^2\rd t\\
&\geq\frac{1}{u}|\int_0^u(\lambda-V)^{-1}Vf(t)\rd t|^2.
\end{align*}
Then by Fubini's theorem,
\begin{align*}\int_0^u(\lambda-V)^{-1}Vf(t)\rd t&=\int_0^u\frac{1}{\lambda}\int_0^tf(s)e^{\frac{t-s}{\lambda}}\rd s\rd t\\
&=\int_0^u\frac{f(s)}{\lambda}\int_s^ue^{\frac{t-s}{\lambda}}\rd t\rd s\\
&=\int_0^u f(s)(e^{\frac{u-s}{\lambda}}-1)\rd s\\
&=\int_0^u f(s)e^{\frac{u-s}{\lambda}}\rd s-Vf(u).
\end{align*}
So according to the fact that $\|V\|\leq 1$ (precisely $\|V\|=\frac{2}{\pi}$ \cite[Theorem 5.8]{Dav88}), we have
\[\|(\lambda-V)^{-1}Vf\|\geq\frac{1}{\sqrt{u}}(|\int_0^u f(s)e^{\frac{u-s}{\lambda}}\rd s|-\|f\|).\]
\end{proof}

\begin{lem}\label{fe}
Let $0\neq f\in C[0,1]$. Then there exist a sequence $\{\lambda_k\}_{k=1}^\infty$ of positive numbers with $\lim\limits_{k\rightarrow \infty}\lambda_k=0$ and a positive number $d$, such that
\[\|(\lambda_k-V)^{-1}f\|\geq e^{\frac{d}{\lambda_k}},~~\forall k\geq 1.\]
\end{lem}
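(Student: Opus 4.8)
The plan is to reduce Lemma \ref{fe} to the estimates already established for the map $\Phi_{f,u}$ and the Volterra resolvent. First I would observe that since $f \in C[0,1]$ is nonzero, there is a point $u \in (0,1]$ with $f(u) \neq 0$; indeed, if $f$ vanishes identically on $(0,1]$ then by continuity $f \equiv 0$, contradicting $f \neq 0$. Fix such a $u$. The idea is to first control $\|(\lambda - V)^{-1} Vf\|$ from below using Lemma \ref{Vf}, and then pass from $Vf$ back to $f$ by absorbing the extra factor of $V$ into the resolvent, i.e.\ by writing $(\lambda - V)^{-1} V = \lambda(\lambda-V)^{-1} - I$ (from $V = \lambda I - (\lambda - V)$), so that
\[
\|(\lambda - V)^{-1} f\| \;\geq\; \frac{1}{|\lambda|}\bigl(\|(\lambda-V)^{-1} V f\| - \|f\|\bigr).
\]

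Next I would apply Lemma \ref{Vf}: for the fixed $u$,
\[
\|(\lambda - V)^{-1} V f\| \;\geq\; \frac{1}{\sqrt{u}}\Bigl(\,\Bigl|\int_0^u f(s) e^{\frac{u-s}{\lambda}}\,\rd s\Bigr| - \|f\|\Bigr).
\]
For $\lambda = 1/\rho$ with $\rho > 0$, the integral appearing here is exactly $\Phi_{f,u}(\rho)$ in the notation of the earlier lemmas. Since $f(u) \neq 0$, Lemma \ref{c-Phi-e-k} (the complex version, applicable as $f \in C[0,1]$) applied with, say, $\rho_0 = 1$ (any fixed positive constant in the role of the ``$\rho$'' parameter there) produces a constant $d_0 > 0$ and a sequence $\rho_k \to +\infty$ with $|\Phi_{f,u}(\rho_k)| > e^{d_0 \rho_k}$ for all $k$. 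Setting $\lambda_k = 1/\rho_k \to 0^+$, we get
\[
\|(\lambda_k - V)^{-1} V f\| \;\geq\; \frac{1}{\sqrt{u}}\bigl(e^{d_0/\lambda_k} - \|f\|\bigr),
\]
and then
\[
\|(\lambda_k - V)^{-1} f\| \;\geq\; \frac{\rho_k}{\sqrt{u}}\bigl(e^{d_0/\lambda_k} - \|f\|\bigr) - \rho_k\|f\|.
\]

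Finally I would clean up the exponential bound. For all large $k$, the dominant term on the right is $\tfrac{\rho_k}{\sqrt u} e^{d_0/\lambda_k} = \tfrac{1}{\sqrt u \,\lambda_k} e^{d_0/\lambda_k}$, which certainly exceeds $e^{d/\lambda_k}$ for any $d$ with $0 < d < d_0$, once $\lambda_k$ is small enough (the polynomial prefactor $1/\lambda_k$ and the subtracted lower-order terms are absorbed by slightly shrinking the exponential rate). Discarding the finitely many initial indices and relabelling the remaining $\lambda_k$, we obtain the claimed sequence and the constant $d$. I do not expect a serious obstacle here: the only mild point of care is bookkeeping the transition from the bound on $\|(\lambda-V)^{-1}Vf\|$ to the bound on $\|(\lambda-V)^{-1}f\|$ and verifying that the polynomial factors and additive error terms can be swallowed by reducing the exponential constant from $d_0$ to $d$; everything substantive has already been done in Lemmas \ref{Vf} and \ref{c-Phi-e-k}.
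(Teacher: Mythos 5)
Your proposal is correct and follows essentially the same route as the paper: choose $u$ with $f(u)\neq 0$, invoke Lemma \ref{c-Phi-e-k} to get $|\Phi_{f,u}(\rho_k)|>e^{d_0\rho_k}$, feed this into Lemma \ref{Vf}, and then pass from $\|(\lambda-V)^{-1}Vf\|$ to $\|(\lambda-V)^{-1}f\|$. The only (harmless) difference is in that last step: you use the identity $(\lambda-V)^{-1}V=\lambda(\lambda-V)^{-1}-I$, whereas the paper simply uses $\|(\lambda-V)^{-1}f\|\geq\|V\|\,\|(\lambda-V)^{-1}f\|\geq\|(\lambda-V)^{-1}Vf\|$ since $\|V\|\leq 1$; both yield the stated bound after shrinking the exponent and discarding finitely many indices.
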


\begin{proof}
Pick $u\in(0,1]$ so that $f(u)\ne 0$. By Lemma \ref{c-Phi-e-k}, we can choose a positive number $d_1$ and a sequence $\{\rho_k\}_{k=1}^\infty$ of positive numbers with $\lim\limits_{k\rightarrow \infty}\rho_k=+\infty$ so that $|\Phi_{f, u}(\rho_k)|>e^{d_1\rho_k}$. Set $d=\frac{d_1}{2}$. Choose $N\in\bN$ so that
\[e^{d\rho_k}>(1+\|f\|),~\forall k\geq N.\]
Set $\lambda_k=\frac{1}{\rho_{k+N}}$. Then from the fact that $\|V\|\leq 1$ and Lemma \ref{Vf},
it follows that
\begin{align*}\|(\lambda_k-V)^{-1}f\|
&\geq \|V\|\|(\lambda_k-V)^{-1}f\|\\
&\geq
\|(\lambda_k-V)^{-1}Vf\|\\
&\geq \frac{1}{\sqrt{u}}(|\int_0^u f(s)e^{\frac{u-s}{\lambda_k}}\rd s|-\|f\|)\\
&\geq\frac{1}{\sqrt{u}}(|\int_0^u f(s)e^{(u-s)\rho_{k+N}}\rd s|-\|f\|)\\
&=\frac{1}{\sqrt{u}}(|\Phi_{f,u}(\rho_{k+N})|-\|f\|)\\
&\geq \frac{1}{\sqrt{u}}(e^{2d\rho_{k+N}}-\|f\|)\\
&= \frac{1}{\sqrt{u}}(e^{d\rho_{(k+N)}}e^{d\rho_{(k+N)}}-\|f\|)\\
&\geq \frac{1}{\sqrt{u}}[e^{\frac{d}{\lambda_k}}(1+\|f\|)-\|f\|]\\
&>e^\frac{d}{\lambda_k}.
\end{align*}
\end{proof}

The following fact is known. For completeness, we include a proof here.

\begin{lem}\label{norm of resolvent}
For $\lambda\neq 0$,
\[\|(\lambda-V)^{-1}\|\leq \frac{1}{|\lambda|}e^{\frac{1}{|\lambda|}}.\]
\end{lem}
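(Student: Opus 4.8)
\textbf{Proof proposal for Lemma \ref{norm of resolvent}.}

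The plan is to use the Neumann series expansion $(\lambda-V)^{-1}=\sum_{n=0}^\infty \lambda^{-(n+1)}V^n$, which converges for $\lambda\neq 0$ since $V$ is quasinilpotent. By the triangle inequality this gives $\|(\lambda-V)^{-1}\|\leq \sum_{n=0}^\infty |\lambda|^{-(n+1)}\|V^n\|$, so everything reduces to a good bound on the operator norms $\|V^n\|$. First I would recall the explicit formula $V^nf(t)=\int_0^t\frac{(t-s)^{n-1}}{(n-1)!}f(s)\rd s$ (already used in the proof of Lemma \ref{metric at Vf}), which exhibits $V^n$ as an integral operator with kernel $K_n(t,s)=\frac{(t-s)^{n-1}}{(n-1)!}$ on $0\le s\le t\le 1$ and $0$ otherwise.

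Next I would estimate $\|V^n\|$ via the Schur test (or simply the crude Hilbert--Schmidt / $L^1$-$L^\infty$ bound on the kernel). Since $|K_n(t,s)|\le \frac{1}{(n-1)!}$ on the triangle and the triangle has area $\le 1$, one gets $\sup_t\int_0^1|K_n(t,s)|\rd s\le \frac{1}{(n-1)!}$ and likewise $\sup_s\int_0^1|K_n(t,s)|\rd t\le\frac{1}{(n-1)!}$, hence by Schur's test $\|V^n\|\le \frac{1}{(n-1)!}$ for $n\ge 1$ (and $\|V^0\|=1$). Plugging this in,
\[
\|(\lambda-V)^{-1}\|\le \frac{1}{|\lambda|}+\sum_{n=1}^\infty \frac{1}{|\lambda|^{n+1}(n-1)!}=\frac{1}{|\lambda|}+\frac{1}{|\lambda|^2}\sum_{m=0}^\infty\frac{1}{|\lambda|^{m}m!}=\frac{1}{|\lambda|}+\frac{1}{|\lambda|^2}e^{1/|\lambda|}.
\]
Finally I would absorb the two terms into the stated bound: since $e^{1/|\lambda|}\ge 1$, we have $\frac{1}{|\lambda|}\le \frac{1}{|\lambda|}e^{1/|\lambda|}$, and it remains to check $\frac{1}{|\lambda|}+\frac{1}{|\lambda|^2}e^{1/|\lambda|}\le \frac{1}{|\lambda|}e^{1/|\lambda|}$, i.e. $\frac{1}{|\lambda|}\le e^{1/|\lambda|}(1-\frac{1}{|\lambda|})$; this is immediate when $|\lambda|\ge 1$, and for $|\lambda|<1$ one uses the sharper grouping $\frac{1}{|\lambda|}+\frac{1}{|\lambda|^2}e^{1/|\lambda|}\le \frac{1}{|\lambda|^2}e^{1/|\lambda|}(|\lambda|+1)\le\frac1{|\lambda|}e^{1/|\lambda|}$ once $e^{1/|\lambda|}\ge$ the relevant factor — in any case a cleaner route is to bound directly $\sum_{n=0}^\infty\frac{1}{|\lambda|^{n+1}n!}=\frac{1}{|\lambda|}e^{1/|\lambda|}$ using the even coarser estimate $\|V^n\|\le\frac{1}{n!}$, which follows from $\|V\|\le 1$ only giving $\|V^n\|\le 1$ — so instead I will use $\|V^n\|\le\frac{1}{n!}$, which the kernel bound does yield after noting $\int_0^t\frac{(t-s)^{n-1}}{(n-1)!}\rd s=\frac{t^n}{n!}\le\frac1{n!}$.

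The only subtlety — and the ``main obstacle'', though it is minor — is getting the combinatorial factor right so that the final sum is exactly $\frac{1}{|\lambda|}e^{1/|\lambda|}$ rather than something slightly larger; this is handled by using the bound $\|V^n\|\le \frac{1}{n!}$ (valid since the kernel of $V^n$ integrates to $\frac{t^n}{n!}\le\frac1{n!}$ in $s$, uniformly in $t$, and symmetrically in $t$, so Schur's test applies), whence $\|(\lambda-V)^{-1}\|\le\sum_{n=0}^\infty\frac{\|V^n\|}{|\lambda|^{n+1}}\le\frac{1}{|\lambda|}\sum_{n=0}^\infty\frac{1}{n!|\lambda|^n}=\frac{1}{|\lambda|}e^{1/|\lambda|}$, which is precisely the claim.
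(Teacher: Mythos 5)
Your argument is correct and follows essentially the same route as the paper: expand $(\lambda-V)^{-1}$ as a Neumann series and use $\|V^n\|\le \frac{1}{n!}$ to sum the series to $\frac{1}{|\lambda|}e^{1/|\lambda|}$. The only difference is that the paper simply cites Kershaw for the bound $\|V^n\|\le\frac{1}{n!}$, whereas you derive it self-containedly from the kernel $\frac{(t-s)^{n-1}}{(n-1)!}$ via the Schur test (both marginal integrals of the kernel being at most $\frac{1}{n!}$); after discarding the false start with the cruder $\frac{1}{(n-1)!}$ bound, your final computation is exactly the paper's.
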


\begin{proof}
By \cite[Corollary 5.2]{Ker99},
\[\|V^n\|\leq \frac{1}{n!},~~n\in \bN.\]
Therefore, for $\lambda\neq 0$,
\[\|(\lambda-V)^{-1}\|\leq \frac{1}{|\lambda|}\sum_{n=0}^\infty \frac{\|V\|^n}{|\lambda|^n}\leq \frac{1}{|\lambda|}e^{\frac{1}{|\lambda|}}.\]
\end{proof}

Now we have all the pieces in place to prove the power set of $V$ does not contain $0$!

\begin{thm}\label{0 is not in the power set of V}
Let $V$ be the Volterra operator on $L^2[0,1]$. Then for any nonzero vector $f$ in $L^2[0,1]$, $k_f>0$.
In other words, $0\notin \Lambda(V)$.
\end{thm}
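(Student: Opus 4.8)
The plan is to reduce the general case $0\neq f\in L^2[0,1]$ to the dense case $f\in C[0,1]$ by exploiting the commutant structure already built into Lemma~\ref{commute} and Lemma~\ref{fe}. First I would observe that for any nonzero $f\in L^2[0,1]$, the vector $Vf$ is a nonzero \emph{continuous} function (indeed $Vf\in C[0,1]$ with $Vf(0)=0$), since $V$ is injective. So Lemma~\ref{fe} applies to $g\doteq Vf\in C[0,1]$: there are positive numbers $\lambda_k\to 0$ and $d>0$ with $\|(\lambda_k-V)^{-1}Vf\|\geq e^{d/\lambda_k}$ for all $k$. Combining this lower bound with the upper bound $\|(\lambda-V)^{-1}\|\leq \frac{1}{|\lambda|}e^{1/|\lambda|}$ from Lemma~\ref{norm of resolvent}, I get, along the sequence $\lambda_k$,
\[
\frac{\ln\|(\lambda_k-V)^{-1}Vf\|}{\ln\|(\lambda_k-V)^{-1}\|}\geq \frac{d/\lambda_k}{\ln\frac{1}{\lambda_k}+\frac{1}{\lambda_k}}\xrightarrow[k\to\infty]{}d>0,
\]
since the denominator is asymptotically $\frac{1}{\lambda_k}$. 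Taking $\limsup$ over all $\lambda\to 0$ (not just the subsequence) this forces $k_{Vf}(V)\geq d>0$.

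Next I would upgrade $k_{Vf}>0$ to $k_f>0$. This is the easy direction of the monotonicity: since $V$ commutes with itself and $V$ is injective, $Vf\neq 0$ whenever $f\neq 0$, and Lemma~\ref{commute} (applied with $A=V$, $T=V$, $x=f$) gives $k_{Vf}(V)\leq k_f(V)$. Hence $k_f(V)\geq k_{Vf}(V)\geq d>0$, which is exactly the assertion. Since $f$ was an arbitrary nonzero vector, $0\notin\Lambda(V)$.

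The only genuine subtlety, and the place where one must be slightly careful, is that Lemma~\ref{fe} produces the lower bound only along a sequence $\lambda_k\to 0$, whereas $k_{Vf}(V)$ is a $\limsup$ as $\lambda\to 0$; but a $\limsup$ dominates the limit along any particular sequence, so this is harmless. One should also double-check the elementary asymptotic $\ln\|(\lambda_k-V)^{-1}\|\sim \frac{1}{\lambda_k}$: the upper bound gives $\ln\|(\lambda_k-V)^{-1}\|\leq \ln\frac{1}{\lambda_k}+\frac{1}{\lambda_k}$, and the trivial lower bound $\|(\lambda-V)^{-1}\|\geq \frac{1}{|\lambda|}$ gives $\ln\|(\lambda_k-V)^{-1}\|\geq \ln\frac{1}{\lambda_k}$, so the ratio $\frac{(d/\lambda_k)}{\ln\|(\lambda_k-V)^{-1}\|}$ is squeezed to $d$. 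I do not expect any real obstacle here: all the analytic heavy lifting — the Phragmén–Lindelöf argument behind Lemma~\ref{we need}, the non-vanishing argument via the Weierstrass approximation theorem in Lemma~\ref{up-lim-Phi}, and the delicate splitting estimate of Lemma~\ref{Phi-e-k}/\ref{fe} — has already been carried out in the preceding lemmas, so the proof of the theorem itself is a short assembly of those pieces together with Lemma~\ref{commute}.
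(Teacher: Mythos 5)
Your proposal is correct and follows essentially the same route as the paper: apply Lemma~\ref{fe} to the nonzero continuous function $Vf$, combine the resulting lower bound $e^{d/\lambda_k}$ with the resolvent upper bound of Lemma~\ref{norm of resolvent} to get $k_{Vf}\geq d>0$, and then invoke Lemma~\ref{commute} (with $A=T=V$) to conclude $k_f\geq k_{Vf}>0$. The only cosmetic difference is that the paper states the continuous case as an intermediate claim before specializing to $Vf$, whereas you fold the two steps together; the substance is identical.
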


\begin{proof}

By Lemma \ref{fe}, if $f\neq 0$ is continuous, then there exist positive numbers $d$ and $\lambda_k$ with $\lim\limits_{k\rightarrow \infty}\lambda_k=0$ such that
\[\ln\|(\lambda_k-V)^{-1}f\|\geq \frac{d}{\lambda_k}.\]
On the other hand, by Lemma \ref{norm of resolvent},
\[\|(\lambda_k-V)^{-1}\|\leq \frac{1}{|\lambda_k|}e^{\frac{1}{|\lambda_k|}}.\]
Thus,  \[\frac{\ln\|(\lambda_k-V)^{-1}f\|}{\ln\|(\lambda_k-V)^{-1}\|}\geq\frac{\frac{d}{|\lambda_k|}}{\frac{1}{|\lambda_k|}+\ln\frac{1}{|\lambda_k|}}.\]
 So,
\[k_f\geq d>0.\]

 For any nonzero $f\in L^2[0,1]$, as $Vf$ is nonzero and continuous, by Lemma \ref{commute},
\[k_f\geq k_{Vf}>0.\]
\end{proof}

\begin{prop}\label{key estimate}
For $\alpha\in [0,1)$, let $f_\alpha$ be the characteristic function of $[\alpha,1]$. Then for $\lambda<0$,
\[\|(\lambda-V)^{-1}f_\alpha\|^2=\frac{1}{-2\lambda}[1-e^{\frac{2}{\lambda}(1-\alpha)}].\]
In particular,
\[\underset{\lambda\to 0^-}{\lim}\frac{\ln\|(\lambda-V)^{-1}f_\alpha\|}{\ln \|(\lambda-V)^{-1}f_0\|}= 1.\]
\end{prop}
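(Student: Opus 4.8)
The plan is to compute $\|(\lambda-V)^{-1}f_\alpha\|^2$ explicitly for $\lambda<0$ and then read off the limit. First I would apply Lemma \ref{metric at Vf}, or rather its analogue for $f$ itself: since $f_\alpha$ is the characteristic function of $[\alpha,1]$, I would write $(\lambda-V)^{-1}f_\alpha(t)$ directly. Recall that for $0\ne f\in L^2[0,1]$ one has $(\lambda-V)^{-1}f = \frac1\lambda f + \frac1\lambda (\lambda-V)^{-1}Vf$, and by Lemma \ref{metric at Vf}, $(\lambda-V)^{-1}Vf(t)=\frac1\lambda\int_0^t e^{(t-s)/\lambda}f(s)\,\rd s$. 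Plugging in $f=f_\alpha$ and carrying out the elementary integral $\int_{\max(\alpha,0)}^t e^{(t-s)/\lambda}\,\rd s$ (which is $\lambda(e^{(t-\alpha)/\lambda\wedge\, t/\lambda\,\text{-type term}}-1)$, splitting on whether $t\le\alpha$ or $t>\alpha$), I expect the clean outcome
\[
(\lambda-V)^{-1}f_\alpha(t)=\begin{cases}0,& t\le\alpha,\\[2pt] \dfrac{1}{\lambda}e^{\frac{t-\alpha}{\lambda}},& \alpha<t\le 1.\end{cases}
\]
This is the key computational step, and it is where I would be most careful; once it is in hand the rest is bookkeeping. (One should double-check the case $\alpha=0$ separately, where the formula reads $(\lambda-V)^{-1}f_0(t)=\frac1\lambda e^{t/\lambda}$ on all of $[0,1]$.)

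Next I would square and integrate. Since $\lambda<0$, write $\mu=-\lambda>0$; then
\[
\|(\lambda-V)^{-1}f_\alpha\|^2=\int_\alpha^1\frac{1}{\lambda^2}e^{\frac{2(t-\alpha)}{\lambda}}\,\rd t
=\frac{1}{\lambda^2}\cdot\frac{\lambda}{2}\Bigl(e^{\frac{2(1-\alpha)}{\lambda}}-1\Bigr)
=\frac{1}{-2\lambda}\Bigl(1-e^{\frac{2}{\lambda}(1-\alpha)}\Bigr),
\]
which is exactly the asserted identity; note $1-e^{2(1-\alpha)/\lambda}>0$ because $\lambda<0$ and $1-\alpha>0$, so the right-hand side is genuinely positive.

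Finally, for the limit statement, take logarithms. From the formula,
\[
\ln\|(\lambda-V)^{-1}f_\alpha\|=\tfrac12\ln\tfrac{1}{-2\lambda}+\tfrac12\ln\bigl(1-e^{\frac{2}{\lambda}(1-\alpha)}\bigr).
\]
As $\lambda\to 0^-$, the term $e^{2(1-\alpha)/\lambda}\to 0$ (super fast, since $2(1-\alpha)/\lambda\to-\infty$), so $\ln(1-e^{2(1-\alpha)/\lambda})\to 0$ and is bounded; meanwhile $\ln\frac{1}{-2\lambda}\to+\infty$. Hence both $\ln\|(\lambda-V)^{-1}f_\alpha\|$ and $\ln\|(\lambda-V)^{-1}f_0\|$ are asymptotically $\tfrac12\ln\frac{1}{-2\lambda}\to+\infty$, with the difference between them remaining bounded. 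Therefore their ratio tends to $1$. The only mild subtlety is to observe that the denominator $\ln\|(\lambda-V)^{-1}f_0\|\to+\infty$, so the quotient is eventually well-defined and the bounded discrepancy in the numerator washes out in the limit; this follows from the $\alpha=0$ case of the explicit formula. No serious obstacle is expected here — the entire proposition reduces to the one explicit integration in the first step.
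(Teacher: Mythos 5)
Your proposal is correct and follows essentially the same route as the paper: both reduce everything to the explicit pointwise formula $(\lambda-V)^{-1}f_\alpha(t)=\frac{1}{\lambda}e^{\frac{t-\alpha}{\lambda}}$ on $[\alpha,1]$ (and $0$ below $\alpha$), then integrate and take logarithms. The only cosmetic difference is that the paper derives this formula by summing the Neumann series with the explicit expression for $V^n f_\alpha$, whereas you obtain it from the identity $(\lambda-V)^{-1}=\frac{1}{\lambda}\bigl(I+(\lambda-V)^{-1}V\bigr)$ together with Lemma \ref{metric at Vf}; both derivations are valid and yield the same computation.
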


\begin{proof}
Recall that
\[V^nf_\alpha(t)=\int_0^t \frac{(t-s)^{n-1}}{(n-1)!}f_\alpha(s)\rd s,~~f\in L^2[0,1],~n\geq 1.\]
Thus, for $n\geq 1$,
\[V^nf_\alpha(t)=\begin{cases}0,&\quad\textrm{ if } t\leq \alpha;\\
\\
\frac{(t-\alpha)^n}{n!},&\quad\textrm{ if } \alpha\leq t\leq 1.\end{cases}\\\]
Therefore,
\begin{enumerate}
\item For $0\leq t<\alpha$, we have
\[(\lambda-V)^{-1}f_\alpha(t)=\frac{1}{\lambda}[f_\alpha(t)+\sum\limits_{n=1}^\infty \frac{V^nf_\alpha(t)}{\lambda^n}]=0.\]
\item For $\alpha\leq t\leq 1$, it follows that
\[(\lambda-V)^{-1}f_\alpha(t)=\frac{1}{\lambda}[f_\alpha(t)+\sum\limits_{n=1}^\infty \frac{V^nf_\alpha(t)}{\lambda^n}]=\frac{1}{\lambda}[1+\sum\limits_{n=1}^\infty \frac{(t-\alpha)^n}{\lambda^n n!}]=\frac{1}{\lambda}e^{\frac{t-\alpha}{\lambda}}.\]
\end{enumerate}
For $\lambda<0$,
\[\|(\lambda-V)^{-1}f_\alpha\|^2=\frac{1}{|\lambda|^2}\int_{\alpha}^1 e^{\frac{2}{\lambda}(t-\alpha)}\rd t= \frac{\lambda}{2|\lambda|^2}[e^{\frac{2}{\lambda}(1-\alpha)}-1]=\frac{1}{-2\lambda}[1-e^{\frac{2}{\lambda}(1-\alpha)}].\]
Thus,
\[\underset{\lambda\to 0^-}{\lim}\frac{\ln\|(\lambda-V)^{-1}f_\alpha\|}{\ln \|(\lambda-V)^{-1}f_0\|}=\underset{\lambda\to 0^-}{\lim}\frac{\ln{\frac{1}{-2\lambda}[1-e^{\frac{2}{\lambda}(1-\alpha)}]}}{\ln{\frac{1}{-2\lambda}[1-e^{\frac{2}{\lambda}(1-0)}]}}=1.\]

\end{proof}

%

We now proceed to make some comments. For $\alpha\in (0,1)$, as shown in Proposition \ref{key estimate},
\[\underset{\lambda\to 0^-}{\lim}\frac{\ln\|(\lambda-V)^{-1}f_\alpha\|}{\ln \|(\lambda-V)^{-1}f_0\|}= 1.\]
It seems that Douglas and Yang didn't complete the assertion that $(0,1)\subset\Lambda(V)$ if one examining the proof of \cite[Proposition 6.2]{DY3}.
Some extra work will be needed.


%

\begin{prop}\label{power set of V}
For $\alpha\in [0,1)$, let $f_\alpha$ be the characteristic function of $[\alpha,1]$ and $g_\alpha\doteq Vf_\alpha$. Then $$\{k_{g_\alpha}:\alpha\in [0,1)\}=(0,1].$$
\end{prop}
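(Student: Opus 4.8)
The goal is to identify $\{k_{g_\alpha}:\alpha\in[0,1)\}$ with $(0,1]$, where $g_\alpha=Vf_\alpha$. The inclusion ``$\subseteq(0,1]$'' is free: $\Lambda(V)\subseteq[0,1]$ by Remark \ref{basic remark}(1), and $0\notin\Lambda(V)$ by Theorem \ref{0 is not in the power set of V}. So the real content is the reverse inclusion: every $c\in(0,1]$ is realized as some $k_{g_\alpha}$. The natural strategy is to compute, or at least sandwich, $k_{g_\alpha}$ explicitly and show the map $\alpha\mapsto k_{g_\alpha}$ surjects onto $(0,1]$, presumably with $k_{g_0}=1$ and $k_{g_\alpha}\to 0$ as $\alpha\to 1^-$, together with some intermediate-value-type behaviour.

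\textbf{Upper bound for $k_{g_\alpha}$.} First I would pin down $\|(\lambda-V)^{-1}\|$: by Lemma \ref{norm of resolvent}, $\ln\|(\lambda-V)^{-1}\|\leq \tfrac{1}{|\lambda|}+\ln\tfrac{1}{|\lambda|}$, so the denominator in the $k_{g_\alpha}$ limsup is asymptotically $\tfrac{1}{|\lambda|}$. For the numerator, I would use Lemma \ref{metric at Vf}: $(\lambda-V)^{-1}g_\alpha(t)=\tfrac1\lambda\int_0^t e^{(t-s)/\lambda}f_\alpha(s)\,ds$, which vanishes for $t\le\alpha$ and equals $e^{(t-\alpha)/\lambda}-1$ over $\tfrac{\lambda}{\lambda}$... more precisely one gets $\int_\alpha^t e^{(t-s)/\lambda}\,ds$ up to the $1/\lambda$ factor, so $\|(\lambda-V)^{-1}g_\alpha\|^2 = \int_\alpha^1 |(\lambda-V)^{-1}g_\alpha(t)|^2\,dt$ can be bounded above by something like $C e^{2(1-\alpha)\,\mathrm{Re}(1/\lambda)}$ when $\mathrm{Re}(1/\lambda)>0$, i.e.\ asymptotically $\ln\|(\lambda-V)^{-1}g_\alpha\|\lesssim (1-\alpha)\,\mathrm{Re}(1/\lambda)$. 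Dividing, this forces $k_{g_\alpha}\le 1-\alpha$. Making this rigorous requires controlling $\lambda$ over all directions (not just $\lambda<0$), but Proposition \ref{arg}-type reasoning — or a direct estimate $|e^{(t-s)/\lambda}|=e^{(t-s)\mathrm{Re}(1/\lambda)}\le e^{(t-s)/|\lambda|}$ — keeps it manageable; the worst case is $\lambda$ real positive where $\mathrm{Re}(1/\lambda)=1/|\lambda|$.

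\textbf{Lower bound for $k_{g_\alpha}$, and surjectivity.} For the matching lower bound I would apply Lemma \ref{Vf} with $u=1$ (or $u$ slightly less than $1$): along a suitable sequence $\lambda_k\to 0$ one wants $\ln\|(\lambda_k-V)^{-1}g_\alpha\|\gtrsim (1-\alpha)/|\lambda_k|$. For $f=f_\alpha$ the inner integral $\int_0^u f_\alpha(s)e^{(u-s)/\lambda}\,ds=\int_\alpha^u e^{(u-s)/\lambda}\,ds$ is, for $\lambda<0$ real, equal to $|\lambda|(1-e^{(u-\alpha)/\lambda})\to|\lambda|$ — too small. So instead I would take $\lambda\to 0$ through \emph{positive} reals (or use the cleaner route: note $g_\alpha=Vf_\alpha$ and apply the explicit Proposition \ref{key estimate} computation $\|(\lambda-V)^{-1}f_\alpha\|^2=\tfrac{1}{-2\lambda}[1-e^{2(1-\alpha)/\lambda}]$ for $\lambda<0$, then relate $k_{g_\alpha}$ to $k_{f_\alpha}$). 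Actually the slickest path: $k_{g_\alpha}=k_{Vf_\alpha}$, and from Proposition \ref{key estimate}, $\ln\|(\lambda-V)^{-1}f_\alpha\|\sim \tfrac12\ln\tfrac1{-2\lambda}$ for $\lambda\to 0^-$ — still not enough on its own, since along $\lambda\to0^-$ the resolvent of $V$ doesn't blow up exponentially. The honest mechanism must use $\lambda\to0^+$ (or complex $\lambda$), where $\|(\lambda-V)^{-1}f_\alpha(t)\|$ involves $e^{(t-\alpha)/\lambda}$ with $1/\lambda\to+\infty$, giving $\|(\lambda-V)^{-1}f_\alpha\|^2\approx \tfrac{1}{2\lambda}e^{2(1-\alpha)/\lambda}$, hence $\ln\|(\lambda-V)^{-1}f_\alpha\|\sim (1-\alpha)/\lambda$, and dividing by $\ln\|(\lambda-V)^{-1}\|\sim 1/\lambda$ yields exactly $k_{f_\alpha}=1-\alpha$; since $V$ is injective and $f_\alpha\ne0$, Lemma \ref{commute} gives $k_{g_\alpha}\le k_{f_\alpha}$, and the reverse should come from the same asymptotics applied to $g_\alpha$ directly via Lemma \ref{metric at Vf}/Lemma \ref{Vf}. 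Once $k_{g_\alpha}=1-\alpha$ for all $\alpha\in[0,1)$ is established, the set $\{k_{g_\alpha}:\alpha\in[0,1)\}=\{1-\alpha:\alpha\in[0,1)\}=(0,1]$ immediately.

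\textbf{Main obstacle.} The crux is getting a clean two-sided asymptotic $\ln\|(\lambda-V)^{-1}g_\alpha\|\sim (1-\alpha)/|\lambda|$ as $\lambda\to 0$; the upper bound must hold for $\lambda$ approaching $0$ from \emph{every} direction (so that the limsup is controlled), while the lower bound only needs one good sequence $\lambda_k$. Reconciling these — in particular ruling out that some complex direction gives a larger numerator than the real-positive direction — is where the care goes; the $|e^{(t-s)/\lambda}|\le e^{(t-s)/|\lambda|}$ estimate handles the upper bound uniformly, and the real-positive sequence $\lambda_k=1/k$ handles the lower bound, so the two meet at $1-\alpha$.
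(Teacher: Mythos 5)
Your overall plan is right — the content is to show $k_{g_\alpha}=1-\alpha$, the lower bound $k_{g_\alpha}\geq 1-\alpha$ via $\lambda\to 0^+$ along the positive reals together with Lemma \ref{norm of resolvent} is exactly what the paper does, and the inclusion $\subseteq(0,1]$ is indeed free from Theorem \ref{0 is not in the power set of V}. But your upper bound $k_{g_\alpha}\leq 1-\alpha$ has a genuine gap. You bound the numerator by $\ln\|(\lambda-V)^{-1}g_\alpha\|\lesssim(1-\alpha)\operatorname{Re}(1/\lambda)\leq(1-\alpha)/|\lambda|$ and then ``divide'' by a denominator you assert is ``asymptotically $1/|\lambda|$.'' That assertion is false: Lemma \ref{norm of resolvent} gives only the \emph{upper} bound $\ln\|(\lambda-V)^{-1}\|\leq \frac{1}{|\lambda|}+\ln\frac{1}{|\lambda|}$, and to bound the ratio from above you need a \emph{lower} bound on the denominator. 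In general $\ln\|(\lambda-V)^{-1}\|$ is nowhere near $1/|\lambda|$; e.g.\ for $\lambda$ on the imaginary axis or the negative real axis the resolvent norm grows only polynomially in $1/|\lambda|$, so the denominator is of order $\ln(1/|\lambda|)$. Dividing your uniform numerator bound $(1-\alpha)/|\lambda|$ by the only generally valid lower bound $\ln(1/|\lambda|)$ gives a quantity that blows up, so the argument as written does not close. (The same gap infects your alternative route via $k_{g_\alpha}\leq k_{f_\alpha}$ from Lemma \ref{commute}: establishing $k_{f_\alpha}\leq 1-\alpha$ requires the identical denominator control.)

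The missing idea, which is the crux of the paper's proof, is to get a lower bound on the denominator that scales with the \emph{same} quantity $\operatorname{Re}(1/\lambda)$ that governs the numerator. The paper does this by observing $\|(\lambda-V)^{-1}\|\geq\|(\lambda-V)^{-1}Vf_0\|=\|h_{0,\lambda}\|$ (since $\|Vf_0\|\leq 1$), and $\ln\|h_{0,\lambda}\|\gtrsim \operatorname{Re}(1/\lambda)$ when $\operatorname{Re}(1/\lambda)$ is large; then along any sequence realizing the limsup one splits into two cases: if $\operatorname{Re}(1/\lambda_n)$ stays bounded, the numerator stays bounded while the denominator tends to infinity (using $\|(\lambda-V)^{-1}\|\geq 1/|\lambda|$), so the ratio tends to $0$; if $\operatorname{Re}(1/\lambda_{n_k})\to+\infty$, then the ratio $\ln\|h_{\alpha,\lambda_{n_k}}\|/\ln\|h_{0,\lambda_{n_k}}\|\to(1-\alpha)$. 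Your proposal never produces a lower bound on $\ln\|(\lambda-V)^{-1}\|$ of order $\operatorname{Re}(1/\lambda)$, and without it the upper bound — and hence the identity $k_{g_\alpha}=1-\alpha$ — is not established.
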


\begin{proof}

From Lemma \ref{metric at Vf},
\[(\lambda-V)^{-1}Vf_\alpha(t)=
\frac{1}{\lambda}\int_0^te^{\frac{t-s}{\lambda}}f_\alpha(s)\rd s,~~t\in [0,1].\]
Then it is straightforward to check that
\[(\lambda-V)^{-1}Vf_\alpha(t)=\begin{cases}0,&\quad\textrm{ if } t\leq \alpha;\\
\\
e^{\frac{t-\alpha}{\lambda}}-1,&\quad\textrm{ if } \alpha\leq t\leq 1.\end{cases}\\\]
For $\lambda\neq 0$, set $h_{\alpha,\lambda}\doteq (\lambda-V)^{-1}g_\alpha$.
Then ,
\[\|h_{\alpha,\lambda}\|^2=\int_{\alpha}^1 |e^{\frac{t-\alpha}{\lambda}}-1|^2 \rd t.\]
For $\lambda\in \bC$, let $\overline{\lambda}$, $\textup{Re}(\lambda)$ and $\textup{Im}(\lambda)$
be the complex conjugate, the real part and imaginary part of $\lambda$ respectively.

We first state some observations.
\begin{enumerate}
\item For $\lambda>0$,
\[\|h_{\alpha,\lambda}\|^2=\int_{\alpha}^1 [e^{\frac{2}{\lambda}(t-\alpha)}-2e^{\frac{(t-\alpha)}{\lambda}}+1]\rd t
=\frac{\lambda}{2}e^{\frac{2}{\lambda}(1-\alpha)}-2\lambda e^{\frac{1}{\lambda}(1-\alpha)}+[\frac{3\lambda}{2}+(1-\alpha)].\]
Then there exists $\delta(\alpha)>0$, such that
\[\|h_{\alpha,\lambda}\|^2\geq \frac{\lambda}{4}e^{\frac{2}{\lambda}(1-\alpha)},~~\textup{if}~0<\lambda<\delta(\alpha).\]
\item For $\lambda\neq 0$ and $\textup{Re}(\lambda)\leq 0$, \[|e^{\frac{t-\alpha}{\lambda}}|=e^{\frac{\textup{Re}(\lambda)(t-\alpha)}{|\lambda|^2}}\leq 1,~~ \textup{if}~~t\in [\alpha,1].\]
    Therefore, $\|h_{\alpha,\lambda}\|^2\leq 4$.
\item  For $\lambda\neq 0$,
\[\|h_{\alpha,\lambda}\|^2=\int_{\alpha}^1 [e^{2\textup{Re}(\frac{1}{\lambda})(t-\alpha)}-e^{\frac{(t-\alpha)}{\lambda}}-e^{\frac{(t-\alpha)}{\overline{\lambda}}}+1]\rd t\]
\[=\frac{1}{2\textup{Re}(\frac{1}{\lambda})}[e^{2\textup{Re}(\frac{1}{\lambda})(1-\alpha)}-1]-\lambda[e^{\textup{Re}(\frac{1}{\lambda})(1-\alpha)}e^{i\textup{Im}(\frac{1}{\lambda})(1-\alpha)}-1]
-\overline{\lambda}[e^{\textup{Re}(\frac{1}{\lambda})(1-\alpha)}e^{-i\textup{Im}(\frac{1}{\lambda})(1-\alpha)}-1]+(1-\alpha)~~~~~~(*).\]
\end{enumerate}

Fix $\alpha\in [0,1)$. We will show that $k_{g_\alpha}=1-\alpha$.
\begin{itemize}
\item By the above-mentioned (1) and Lemma \ref{norm of resolvent},
\[k_{g_\alpha}\geq \underset{\lambda\to 0^+}{\lim\sup}\frac{\ln \|h_{\alpha,\lambda}\|}{\ln \|(\lambda-V)^{-1}\|}
\geq \underset{\lambda\to 0^+}{\lim\sup}\frac{\frac{1}{2}\ln [\frac{\lambda}{4}e^{\frac{2}{\lambda}(1-\alpha)}]}{\ln [\frac{1}{\lambda}e^{\frac{1}{\lambda}}]}=1-\alpha.\]
\item Set $\Omega\doteq  \{\lambda:\lambda\in \bC,\textup{Re}(\lambda)>0\}$. Then according to the above-mentioned (2) and the fact that
\[\|(\lambda-V)^{-1}\|\geq \frac{1}{|\lambda|},~~\forall \lambda\neq 0,\]
it follows that
\[k_{g_\alpha}=\underset{\Omega\ni\lambda\to 0}{\lim\sup}\frac{\ln \|h_{\alpha,\lambda}\|}{\ln \|(\lambda-V)^{-1}\|}.\]
Choose a sequence $\{\lambda_n:n\geq 1\}\subset \Omega$
such that $\underset{n\to \infty}{\lim}\lambda_n=0$ and
\[k_{g_\alpha}=\underset{n\to \infty}{\lim}\frac{\ln \|h_{\alpha,\lambda_n}\|}{\ln \|(\lambda_n-V)^{-1}\|}.\]
Let $M\doteq \underset{n\in \bN}{\sup} \textup{Re}(\frac{1}{\lambda_n})$.
 Pick a subsequence $\{\lambda_{n_k}:k\geq 1\}$ of $\{\lambda_n: n\geq 1\}$
such that $\underset{k\to \infty}{\lim}\textup{Re}(\frac{1}{\lambda_{n_k}})=M$.
\item We claim that $M=+\infty$.
Otherwise, suppose that $M<+\infty$.
Note that from the fact that $\underset{k\to \infty}{\lim}\lambda_{n_k}=0$,
\begin{align*}\underset{k\to \infty}{\lim\sup}|\lambda_{n_k} [e^{\textup{Re}(\frac{1}{\lambda_{n_k}})(1-\alpha)}e^{i\textup{Im}(\frac{1}{\lambda_{n_k}})(1-\alpha)}-1]|
&\leq
\underset{k\to \infty}{\lim\sup}|\lambda_{n_k}|[e^{\textup{Re}(\frac{1}{\lambda_{n_k}})(1-\alpha)}+1]\\
&\leq \underset{k\to \infty}{\lim\sup}|\lambda_{n_k}|\underset{k\to \infty}{\lim\sup}[e^{\textup{Re}(\frac{1}{\lambda_{n_k}})(1-\alpha)}+1]\\
&\leq0\cdot [e^{M(1-\alpha)}+1]\\
&=0;
\end{align*}
and similarly,
\[\underset{k\to \infty}{\lim\sup}|\overline{\lambda_{n_k}} [e^{\textup{Re}(\frac{1}{\lambda_{n_k}})(1-\alpha)}e^{-i\textup{Im}(\frac{1}{\lambda_{n_k}})(1-\alpha)}-1]|
=0.\]
Then by using $(*)$, we have
\[\underset{k\to \infty}{\lim}\|h_{\alpha,\lambda_{n_k}}\|^2=\frac{1}{2M}[e^{2M(1-\alpha)}-1]+(1-\alpha)\in (0, +\infty).\]
So by
\[\|(\lambda_{n_k}-V)^{-1}\|\geq \frac{1}{|\lambda_{n_k}|},~~k\in \bN,\]
we have
\[k_{g_\alpha}=\underset{k\to \infty}{\lim}\frac{\ln \|h_{\alpha,\lambda_{n_k}}\|}{\ln \|(\lambda_{n_k}-V)^{-1}\|}=0<1-\alpha.\]
This contradiction implies that $M\doteq \underset{n\in \bN}{\sup}\textup{Re}(\frac{1}{\lambda_n})=+\infty$ holds. In other words,
$\underset{k\to \infty}{\lim}\textup{Re}(\frac{1}{\lambda_{n_k}})=+\infty$.
\item By $(*)$, there exists  $N\in \bN$, such that for $k\geq N$,
\[\|h_{\alpha,\lambda_{n_k}}\|^2\leq e^{2\textup{Re}(\frac{1}{\lambda_{n_k}})(1-\alpha)}+2e^{\textup{Re}(\frac{1}{\lambda_{n_k}})(1-\alpha)}+3\leq 4e^{2\textup{Re}(\frac{1}{\lambda_{n_k}})(1-\alpha)},\]
\[\|h_{\alpha,\lambda_{n_k}}\|^2\geq \frac{1}{2\textup{Re}(\frac{1}{\lambda_{n_k}})}e^{2\textup{Re}(\frac{1}{\lambda_{n_k}})(1-\alpha)}
-2e^{\textup{Re}(\frac{1}{\lambda_{n_k}})(1-\alpha)}-3\geq \frac{1}{4\textup{Re}(\frac{1}{\lambda_{n_k}})}e^{2\textup{Re}(\frac{1}{\lambda_{n_k}})(1-\alpha)}\gg 10;\]
and
\[\|h_{0,\lambda_{n_k}}\|^2\geq \frac{1}{2\textup{Re}(\frac{1}{\lambda_{n_k}})}e^{2\textup{Re}(\frac{1}{\lambda_{n_k}})}
-2e^{\textup{Re}(\frac{1}{\lambda_{n_k}})}-3\geq \frac{1}{4\textup{Re}(\frac{1}{\lambda_{n_k}})}e^{2\textup{Re}(\frac{1}{\lambda_{n_k}})}\gg 10.\]
Therefore, after observing that
\[\|(\lambda_{n_k}-V)^{-1}\|\geq \|(\lambda_{n_k}-V)^{-1}Vf_0\|=\|h_{0, \lambda_{n_k}}\|,\]
we have
\begin{align*}
k_{g_\alpha}& =\underset{k\to \infty}{\lim}\frac{\ln \|h_{\alpha,\lambda_{n_k}}\|}{\ln \|(\lambda_{n_k}-V)^{-1}\|}\\
	&\leq \underset{k\to \infty}{\lim\sup}\frac{\ln \|h_{\alpha,\lambda_{n_k}}\|}{\ln \|h_{0,\lambda_{n_k}}\|}\\
&\leq \underset{k\to \infty}{\lim\sup}\frac{\ln [4e^{2\textup{Re}(\frac{1}{\lambda_{n_k}})(1-\alpha)}]}{\ln [\frac{1}{4\textup{Re}(\frac{1}{\lambda_{n_k}})}e^{2\textup{Re}(\frac{1}{\lambda_{n_k}})}]}\\
&=\underset{k\to \infty}{\lim\sup} \frac{2\textup{Re}(\frac{1}{\lambda_{n_k}})(1-\alpha)+\ln 4}{2\textup{Re}(\frac{1}{\lambda_{n_k}})-\ln[4\textup{Re}(\frac{1}{\lambda_{n_k}})]}\\
&= \underset{k\to \infty}{\lim\sup} \frac{(1-\alpha)+\frac{\ln 4}{2\textup{Re}(\frac{1}{\lambda_{n_k}})}}{1-\frac{\ln[4\textup{Re}(\frac{1}{\lambda_{n_k}})]}{2\textup{Re}(\frac{1}{\lambda_{n_k}})}}\\
&=1-\alpha.
\end{align*}

\end{itemize}
In summary, $k_{g_\alpha}=1-\alpha$.

Therefore,
$$\{k_{g_\alpha}:\alpha\in [0,1)\}=(0,1].$$

\end{proof}

Finally, we are at the position to obtain Theorem \ref{C}.

\begin{proof}[\textup{\textbf{Proof of Theorem \ref{C}}}]
Since $\Lambda(V)\subset [0,1]$, the conclusion now follows from Theorem \ref{0 is not in the power set of V} and
Proposition \ref{power set of V}.
\end{proof}



%

\end{document}